%%Submitted to Advances in Mathematics on January 4, 2021
%%Revision August 22, 2022

\documentclass[12pt]{amsart}
\usepackage[all]{xy}
\usepackage{a4wide}
\usepackage{amssymb}
\usepackage{amsthm,xcolor}
\usepackage{hyperref} 
%Note: pagebackref used to locate references in text
%\usepackage[pagebackref]{hyperref}
\usepackage{graphicx}
\hypersetup{colorlinks=true,linkcolor=blue,citecolor=magenta}
\usepackage{amsmath}
\usepackage{amscd,enumitem}
\usepackage{verbatim}
\usepackage{float}
\usepackage{caption}
\usepackage{color}
\usepackage[mathscr]{eucal}
\usepackage[all]{xy}
\addtolength{\topmargin}{-3mm}
\addtolength{\textheight}{7mm}
\usepackage{hyperref}
\usepackage{bbm}
\usepackage{tikz}
\usetikzlibrary{quotes,angles}
\usepackage{dcolumn}
\textheight8.75in \textwidth6.75in 
\numberwithin{equation}{section}

\def\={\;=\;}  \def\+{\,+\,} \def\m{\,-\,}

\theoremstyle{plain}
\newtheorem{theorem}{Theorem}
\numberwithin{theorem}{section}

\newtheorem*{corollary*}{Corollary}
\newtheorem*{Example*}{Example}

\newtheorem{lemma}[theorem]{Lemma}
\newtheorem{proposition}[theorem]{Proposition}

\newtheorem*{conjecture}{Conjecture}
\theoremstyle{definition}

\newtheorem*{def*}{Definition}
\newtheorem*{theorem*}{Theorem}

\newtheorem*{examples}{Examples}
\newtheorem*{definition*}{Definition}

\theoremstyle{remark}
\newtheorem*{remark}{Remark}

\newtheorem*{threeremarks}{Three remarks}

\newcommand{\maj}{{\text {\rm maj}}}

\newcommand{\len}{\mathrm{length}}
\newcommand{\R}{\mathbb{R}}

\newcommand{\Z}{\mathbb{Z}}

\newcommand{\C}{\mathbb{C}}
\newcommand{\PL}{\mathrm{PL}}

\newcommand{\binomial}[2]{\left ( \begin{matrix} #1\\ #2 \end{matrix}\right) }

\title[Tur\'an inequalities for the plane partition function]{Tur\'an inequalities for the plane partition function}

\author{Ken Ono, Sudhir Pujahari, and Larry Rolen}

\address{Department of Mathematics, University of Virginia, Charlottesville, VA 22904}
 \email{ken.ono691@virginia.edu}

\address{School of Mathematical Sciences, National Institute of Science Education and Research, Bhubaneswar, HBNI, P. O. Jatni, Khurda 752050, Odisha, India}
\email{spujahari@niser.ac.in}

\address{Department of Mathematics, Vanderbilt University, Nashville, TN 37240}
\email{larry.rolen@vanderbilt.edu}

\keywords{Plane partition function, Log-concavity, Tur\'an inequalities}
\subjclass[2020]{Primary 11P82,05A17; Secondary 05A20}

\begin{document}
\thanks{The first
  author thanks  the Thomas Jefferson Fund and the NSF
(DMS-2002265 and DMS-2055118) for their generous support, as well as  the Kavli Institute grant NSF PHY-1748958.
This work was supported by a grant from the Simons Foundation (853830, LR). The third author is also grateful for support from a 2021-2023 Dean's Faculty Fellowship from Vanderbilt University. }

\begin{abstract} 
Heim, Neuhauser and Tr\"oger recently established some inequalities for MacMahon's plane partition function $\PL(n)$ that generalize known results for Euler's partition function $p(n)$.
They also conjectured that $\PL(n)$ is log-concave for all $n\geq 12.$ We prove this conjecture.
Moreover, for every $d\geq 1$, we prove their speculation that $\PL(n)$ satisfies the degree $d$ Tur\'an inequalities for sufficiently large $n$. The case where $d=2$ is the case of log-concavity.
\end{abstract}

\maketitle

\section{Introduction and Statement of Results}

A {\it partition} of a non-negative integer $n$ is any non-increasing sequence of positive integers that sum to $n$.
Hardy and Ramanujan famously proved that the partition function $p(n),$ which counts the number of integer partitions of $n,$ 
satisfies the asymptotic formula
$$
p(n)\sim \frac{1}{4n\sqrt{3}}\cdot e^{\pi\sqrt{2n/3}}.
$$
In the '70s, Nicolas \cite{Nicolas} employed such asymptotics to prove\footnote{This result was reproved in recent work by DeSalvo and Pak \cite{DP}.} that $p(n)$ is
log-concave for $n>25,$ where a sequence of real numbers $\{\alpha(0), \alpha(1),\dots\}$ is said to be {\it log-concave at $n$} if
\begin{equation}\label{logconcave}
\alpha(n)^2 \geq \alpha(n-1)\alpha(n+1).
\end{equation}

The condition of log-concavity for nonvanishing real sequences $\{\alpha(n)$\} is the first example of the {\it Tur\'an inequalities}, which can be conveniently formulated in terms of Jensen polynomials.
The  {\it Jensen polynomial of degree $d$ and shift $n$} is defined by
 \begin{equation}\label{JensenPolynomial}
J_\alpha^{d,n}(X)\,:=\,\sum_{j=0}^d \binomial dj\alpha(n+j)\,X^j.
\end{equation}
For degree $d=2$ and shift $n-1,$ the  roots are
$$
\frac{-\alpha(n)\pm \sqrt{\alpha(n)^2-\alpha(n-1)\alpha(n+1)}}{\alpha(n+1)}.
$$
Therefore, $\alpha(n)$ is log-concave at $n$ if and only if the roots of $J_{\alpha}^{2,n-1}(X)$ are real.
Generalizing log-concavity,  a real sequence is said to satisfy the
{\it degree $d$ Tur\'an inequalities at $n$} if $J_a^{d,n-1}(X)$ is hyperbolic, where a polynomial is hyperbolic if all of its roots are real.

There have been several recent works on the higher Tur\'an inequalities for $p(n).$
Chen, Jia and Wang \cite{ChenJiaWang} 
proved that  $J_p^{3,n}(X)$ is hyperbolic for $n\geq 94,$ which inspired them to conjecture,
for every integer $d\geq 1,$ that there is an integer $N_p(d)$
for which $J_p^{d,n}(X)$ is hyperbolic for $n \geq N_p(d)$.
Griffin, Zagier and two of the authors recently proved (see Theorem~5 of \cite{GORZ}) this conjecture for all degrees $d$. Furthermore, recent work
by Larson and Wagner \cite{LarsonWagner} established the
optimal values
$N_p(4)=206$ and $N_p(5)=381,$ as well as the effective bound $N_p(d)\leq (3d)^{24d}(50d)^{3d^2}.$

We consider such questions for {\it plane partitions} (for background, see  references by Andrews \cite{Andrews} and Stanley \cite{Stanley}). A {\it plane partition} of size $n$ is an array of non-negative integers $\pi:=(\pi_{i,j})$ for which $|\pi|:=\sum_{i,j} \pi_{i,j}=n$, in which the rows and columns are weakly decreasing. The figure below offers a 3d rendering of a plane partition.

\smallskip
\begin{center}
\includegraphics[height=40mm]{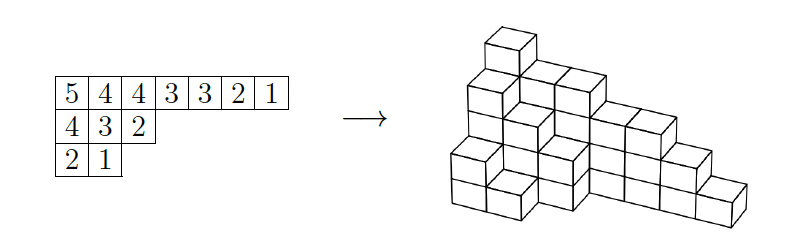}
\captionof{figure}{Example of a plane partition}
\end{center}
\smallskip
If $\PL(n)$ is the number of size $n$ plane partitions, then
MacMahon \cite{MacMahon} proved that
\begin{equation}\label{GenFcn}
f(x)=\sum_{n=0}^{\infty} \PL(n)x^n:=\prod_{n=1}^{\infty}\frac{1}{(1-x^n)^n}=1+x+3x^2+6x^3+13x^4+24x^5+48x^6+\dots.
\end{equation}
This function also appears prominently in physics in connection with the enumeration of small black holes in string theory. Indeed, $f(x)$ is the generating function (for example, see Appendix E of \cite{DDMP})
for the number of BPS bound states between a $D6$ brane and $D0$ branes on $\C^3.$

Heim, Neuhauser, and Tr\"oger \cite{HNT} have  undertaken a study of $\PL(n)$ in analogy with the aforementioned results  on $p(n)$. In addition to proving many inequalities satisfied by $\PL(n)$, they
pose  two further conjectures.  They prove (see Theorem 1.2 of \cite{HNT}) that $\PL(n)$ is log-concave for sufficiently large $n$, and they pose the following explicit conjecture (see Conjecture 1 of \cite{HNT}).

\begin{conjecture}[Heim, Neuhauser, and Tr\"oger]
The function $\PL(n)$ is log-concave for  $n\geq 12.$
\end{conjecture}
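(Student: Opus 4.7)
The plan is to combine a sufficiently precise asymptotic expansion for $\PL(n)$ with a finite computational check. Since Heim--Neuhauser--Tr\"oger already proved log-concavity for sufficiently large $n$, the two tasks are (i) make the relevant asymptotics effective so that an explicit threshold $N_0$ can be extracted, and (ii) verify $\PL(n)^{2}\geq \PL(n-1)\PL(n+1)$ directly for $12\leq n<N_{0}$.

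For step (i), I would invoke a Wright/Meinardus-type asymptotic for $\PL(n)$. MacMahon's generating function $f(x)=\prod_{n\geq 1}(1-x^{n})^{-n}$ is an Euler-type infinite product to which the circle method applies, yielding an expansion of the form
$$
\PL(n)\= A\, n^{-25/36}\exp\!\left(3\,\zeta(3)^{1/3}(n/2)^{2/3}\right)\left(1+\sum_{k=1}^{K}\frac{b_k}{n^{k/3}}+E_K(n)\right),
$$
with explicit constants $A,b_{k}$ and an effective bound $|E_K(n)|\leq C_K\, n^{-(K+1)/3}$. Taking $K$ a few terms past the leading order should suffice. Writing $L(n):=\log\PL(n)$, the condition $L(n-1)+L(n+1)-2L(n)\leq 0$ is a statement about the discrete second difference of this expansion. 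The dominant contribution comes from differencing $3\zeta(3)^{1/3}(n/2)^{2/3}$ twice and is of order $-c\,n^{-4/3}$ with $c>0$; the corrections from the lower-order terms $b_k n^{-k/3}$ contribute $O(n^{-7/3})$ or smaller, and the truncation error contributes $O(n^{-(K+1)/3-2})$ after differencing. Provided the constants in $C_K$ are tracked carefully, the leading negative term dominates and one obtains a strict inequality for $n\geq N_{0}$ with an explicit $N_{0}$.

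For step (ii), MacMahon's product expansion gives a simple recurrence for $\PL(n)$, so one can tabulate $\PL(n)$ up to $n=N_{0}$ and check the inequality termwise. As long as $N_{0}$ can be reduced to the range of at most a few thousand, this is a routine computation; one even expects that the inequality becomes very comfortably positive fairly quickly, since the only close call is near $n=12$.

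The main obstacle is step (i): producing an error term $E_{K}(n)$ that is genuinely effective with sharp constants. Classical Meinardus- or Wright-style arguments typically absorb many constants into $O(\cdot)$ symbols inside the saddle-point and contour estimates, and one has to redo these steps with explicit bookkeeping of all implied constants, including the contribution of the non-principal arcs in the circle method and the relative sizes of the $b_k$. Achieving a threshold $N_{0}$ that is small enough to make the finite check practical is the crux of the work.
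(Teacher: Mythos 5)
Your proposal matches the paper's strategy exactly: make Wright's circle-method asymptotics for $\PL(n)$ effective with explicit error bounds, extract a threshold $N_0$ beyond which log-concavity must hold, and verify the finitely many remaining cases by direct computation. The paper carries this out by proving an effective version of Wright's formula (Theorem~\ref{EffectiveWright}), applying it with $r=2$ to reduce the problem to $n\geq 8820$, and then citing Heim--Neuhauser--Tr\"oger's computer verification for $12\leq n\leq 10^5$; you have correctly identified the crux --- the explicit bookkeeping of all implied constants in the major- and minor-arc estimates --- which is precisely where the paper's technical work is concentrated.
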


Here we resolve this problem.

\begin{theorem}\label{Thm1}
The Heim-Neuhauser-Tr\"oger Conjecture on the log-concavity of $\PL(n)$ is true.
\end{theorem}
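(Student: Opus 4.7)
The plan is to mirror the classical strategy of Nicolas for $p(n)$: use an effective asymptotic expansion to establish log-concavity for all $n \geq N_0$ with some explicit threshold $N_0$, and then verify the finitely many cases $12 \leq n < N_0$ by direct computation. The analytic input is a refinement of Wright's asymptotic for $\PL(n)$, which takes the form
\[
\PL(n)\,=\,\frac{A}{n^{25/36}}\,\exp\!\bigl(c\, n^{2/3}\bigr)\left(1+\sum_{k=1}^{K}\frac{b_k}{n^{k/3}}+E_K(n)\right),
\]
where $c = 3(\zeta(3)/4)^{1/3}$, the constants $A$ and $b_k$ are explicit, and $E_K(n) = O(n^{-(K+1)/3})$ with an effective implied constant. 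Such an expansion can be extracted either from a saddle-point analysis of MacMahon's generating function \eqref{GenFcn} or from a careful application of the circle method.

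Letting $\Delta(n) := 2\log\PL(n)\m\log\PL(n-1)\m\log\PL(n+1)$ and Taylor-expanding, the dominant contribution to $\Delta(n)$ comes from the main exponential $c\,n^{2/3}$ and equals
\[
-\tfrac{d^2}{dn^2}\bigl(c\,n^{2/3}\bigr)\+O\bigl(n^{-10/3}\bigr)\,=\,\tfrac{2c}{9}\,n^{-4/3}\+O\bigl(n^{-10/3}\bigr),
\]
which is positive and exceeds the contributions of the polynomial prefactor (of order $n^{-2}$) and of the asymptotic series (of order $n^{-7/3}$ or smaller). Consequently $\Delta(n) > 0$, i.e., log-concavity holds, for all $n$ beyond an explicit threshold $N_0$. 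The residual inequalities for $12 \leq n < N_0$ can then be verified numerically using the recursion
\[
n\,\PL(n)\,=\,\sum_{k=1}^{n}\sigma_2(k)\,\PL(n-k),\qquad \sigma_2(k):=\sum_{d\mid k} d^2,
\]
obtained by logarithmically differentiating the product in \eqref{GenFcn}; one tabulates $\PL(n)$ up to $n = N_0+1$ and tests the inequality directly.

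The main obstacle is quantitative: one must squeeze $N_0$ down to a computationally tractable value. This requires sharp control of $E_K(n)$, with all implicit constants carried explicitly through the saddle-point or circle-method analysis. The singularity of $f(x)$ at $x=1$ is genuinely more delicate than the modular/eta-quotient input underlying the theory of $p(n)$, since the doubly-indexed product $\prod(1-x^n)^{-n}$ produces a main exponential of order $n^{2/3}$ rather than $n^{1/2}$ and its subleading expansion involves fractional powers of $n$. Once $K$ is taken large enough and the constants in $E_K$ are pinned down, however, $N_0$ becomes manageable and the finite verification is mechanical.
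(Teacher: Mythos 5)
Your proposal follows the same two-step strategy as the paper: derive an effective (explicit-constant) refinement of Wright's asymptotic for $\PL(n)$ via the circle method, deduce log-concavity from it for all $n$ past an explicit threshold, and close the gap $12\le n<N_0$ by direct computation using the recursion $n\,\PL(n)=\sum_{k\le n}\sigma_2(k)\PL(n-k)$; the paper does exactly this, with $r=2$ in Theorem~\ref{EffectiveWright} yielding $N_0=8820$ and the finite verification cited from Heim--Neuhauser--Tr\"oger. The only structural difference is cosmetic: you analyze the second finite difference of $\log\PL(n)$, whereas the paper works with $\PL(n)^2-\PL(n-1)\PL(n+1)$ directly via the two-sided bound \eqref{LCEst}. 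The substance of the paper, however, lies in the step you treat as a black box. Making Wright's error terms explicit is genuinely delicate: the major-arc remainder offers only power savings (not exponential ones), so the balance between taking $r$ large enough and keeping $N_0$ computationally tractable is nontrivial, and for the minor arcs the paper departs from Wright entirely, bounding $E^{\min}(n)$ through the logarithmic derivative of $f$, its interpretation as a divisor-power-sum generating function, and an Euler--Maclaurin estimate following Zagier. Your sketch also writes the expansion in powers of $n^{-1/3}$, but the actual expansion coming from Wright's double sum has gaps (only powers of $n^{-2/3}$ occur), which helps the explicit bookkeeping in the paper. So: right roadmap, same approach, but the analytic heavy lifting you defer is where essentially all of the work resides.
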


\noindent
 Heim et al. also conjectured the direct analog of
 the Chen-Jia-Wang Conjecture on the higher degree Tur\'an inequalities.
 We prove this conjecture.

\begin{theorem}\label{Thm2}
If $d$ is a positive integer, then $J_{\PL}^{d,n}(X)$ is hyperbolic for all sufficiently large $n$.
\end{theorem}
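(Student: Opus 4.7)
The plan is to apply the hyperbolicity criterion developed by Griffin, Zagier, and two of the present authors in \cite{GORZ}, which is precisely the engine that was used to prove the Chen--Jia--Wang conjecture for $p(n)$. In broad strokes, that framework asserts the following: if a positive sequence $A(n)$ admits an asymptotic expansion of the shape
\[
\log\frac{A(n+j)}{A(n)} \;=\; L_1(n)\,j \;+\; L_2(n)\,j^{2} \;+\; \cdots \;+\; L_d(n)\,j^{d} \;+\; (\text{error}),
\]
with $L_k(n) \sim c_k\,\delta(n)^{k}$ for some scale $\delta(n) \to 0^+$ and leading coefficient $c_2 < 0$, and with the error controlled uniformly for $0 \le j \le d$, then a suitable affine renormalization of the Jensen polynomial $J_A^{d,n}(X)$ converges as $n \to \infty$ to the Hermite polynomial $H_d(X)$. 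Since $H_d(X)$ has $d$ simple real zeros, a standard continuity-of-roots argument then forces $J_A^{d,n}(X)$ to be hyperbolic for all sufficiently large $n$.

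To apply this with $A(n) = \PL(n)$ one needs a sufficiently precise asymptotic expansion of $\log\PL(n)$. The classical starting point is E.~M.~Wright's asymptotic
\[
\PL(n) \;\sim\; \frac{\zeta(3)^{7/36}\,e^{2\zeta'(-1)}}{\sqrt{12\pi}\,(2n)^{25/36}}\;\exp\!\Bigl( 3\,\zeta(3)^{1/3}\,(n/2)^{2/3}\Bigr),
\]
which was derived by the saddle point method applied to the generating function $f(x)=\prod_{m\ge 1}(1-x^m)^{-m}$. Pushing the same saddle point calculation (or, equivalently, a Meinardus--Mellin--Barnes argument) to arbitrary order yields a full expansion
\[
\log \PL(n) \;=\; c_0\,n^{2/3} \;-\; \tfrac{25}{36}\log n \;+\; \kappa_0 \;+\; \sum_{i=1}^{M} \kappa_i\,n^{-2i/3} \;+\; O\!\bigl(n^{-2(M+1)/3}\bigr)
\]
for every integer $M\ge 1$, where $c_0 = 3\,(\zeta(3)/4)^{1/3}$. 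Taking $\delta(n) := n^{-2/3}$ and expanding $\log \PL(n+j)$ as a Taylor series in $j$, one checks by direct computation that $L_k(n) \sim c_k\,\delta(n)^{k}$ with $c_2 < 0$, so the hypotheses of the framework in \cite{GORZ} are satisfied and Theorem \ref{Thm2} follows.

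The principal obstacle is supplying the asymptotic expansion in the second paragraph with the required uniformity and to arbitrary depth. Unlike $p(n)$, which enjoys an exact Rademacher series from which any desired number of subleading terms can be read off, $\PL(n)$ has no such closed form; the subleading expansions that appear in the existing literature (for instance, in the work of Heim--Neuhauser--Tr\"oger underlying their proof of log-concavity for large $n$) go only a few terms deep, whereas each fixed degree $d$ in Theorem \ref{Thm2} requires control of roughly $d$ subleading terms together with uniform derivative estimates in the shift parameter $j$. The main technical work is therefore to carry the saddle point analysis of $f(x)$ near $x = 1$ through to all orders with uniform remainder bounds. Once this analytic input is secured, the GORZ criterion delivers Theorem \ref{Thm2} immediately, and Theorem \ref{Thm1} is recovered by combining the $d = 2$ case with a direct numerical verification for the finite range $12 \le n \le N_0$.
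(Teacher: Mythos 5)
Your proposal takes essentially the same route as the paper: it invokes the Hermite-limit criterion of Griffin--Ono--Rolen--Zagier for Jensen polynomials, with the required input being an asymptotic expansion of $\log\PL(n)$ in powers of $n^{-2/3}$ to arbitrary depth. The ``principal obstacle'' you correctly identify is precisely the content of the paper's Theorem~\ref{EffectiveWright} (the explicit version of Wright's expansion), and once that analytic input is in hand your verification that the GORZ hypotheses hold---in particular that the coefficient of $j^2$ is negative, coming from $\binom{2/3}{2}=-1/9$---is the paper's proof of Theorem~\ref{Thm2}.
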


\begin{remark}
In his Ph.D thesis \cite{Pandey}, Pandey will obtain an effective form of Theorem~\ref{Thm2} that is a counterpart to the bound of
$N_p(d)\leq (3d)^{24d}(50d)^{3d^2}$ established \cite{LarsonWagner} by Larson and Wagner for $p(n).$
\end{remark}

The proofs of Theorems~\ref{Thm1} and \ref{Thm2} require a strong asymptotic formula for $\PL(n).$   In the 1930s, Wright \cite{Wright} 
adapted the ``circle method'' of Hardy and Ramanujan to prove  asymptotic formulas for $\PL(n).$ He obtained such a formula for every positive integer $r$,
where the implied error terms are smaller with larger choices of $r$ for large $n$. 
The bulk of this paper is devoted to the lengthy and delicate task of obtaining the first
 asymptotics with explicitly bounded error terms.

To state these formulas, we require 
the two constants
\begin{equation}\label{constants}
A:=\zeta(3)\approx 1.202056\dots, \ \ \ \ {\text {\rm and}}\ \ \ \ 
 c:=2 \int_0^{\infty} \frac{y \log \, y }{e^{2 \pi y}-1} dy= \zeta'(-1)\approx -0.16542\dots.
 \end{equation}
Furthermore, for any pair of non-negative integers $s$ and $m$, we define coefficients $c_{s,m}(n)$ by
$$
 \frac{(1+y)^{2s+2m+\frac{13}{12}}}{(3+2y)^{(m+\frac{1}{2})}}=:\sum_{n=0}^{\infty}c_{s,m}(n)y^n.
$$
 In terms of these coefficients, we define the important numbers
 \begin{equation}\label{asm}
 b_{s,m}:=c_{s,m}(2m).
 \end{equation}
The asymptotic formulas we obtain are defined in terms of special numbers $\beta_0, \beta_1,\dots.$
To define them, for every positive integer $s$ we let
\begin{equation}\label{alphadefn} \alpha_s:=\frac{2 \Gamma(2s+2)\zeta(2s)\zeta(2s+2)}{s(2 \pi)^{4s+2}}.
\end{equation}
The real numbers $\beta_s$ are the Taylor coefficients of
\begin{equation}\label{BetaDefn}
\exp\left(-\sum_{i=1}^{\infty}\alpha_i y^i\right)=:\sum_{n=0}^{\infty} \beta_s y^s.
\end{equation}
 
For each $r,$ we use the numbers $\beta_0,\dots, \beta_{r+1}$ to derive the following explicit asymptotic formula.

\begin{theorem}\label{EffectiveWright}
If $r\in \Z^{+},$ then for every integer $n\geq  \max(n_r, \ell_r, 87)$ (see (\ref{nrdefn}-\ref{ellrdefn})) we have 
$$
\PL(n)=\frac{e^{c+3AN_n^2}}{2\pi }\sum_{s=0}^{r+1}\sum_{m=0}^{r+1}\frac{(-1)^m\beta_sb_{s,m}\Gamma\left(m+\frac12\right)}{A^{m+\frac12}N_n^{2s+2m+\frac{25}{12}}}+E_r^{\maj}(n)+E^{\min}(n),
$$
where  $|E_r^{\maj}(n)|\leq \widehat{E}_r^{\maj}(n)$  (see definition (\ref{ErMaj})), $N_n:=(\frac{n}{2A})^{\frac{1}{3}}$ and
$$|E^{\min}(n)|\leq \exp\left(\left(3A-\frac{2}{5}\right)n^2/(2A)^{\frac{2}{3}}\right).$$
\end{theorem}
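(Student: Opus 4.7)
The strategy is Wright's refinement of the Hardy-Ramanujan circle method, carried out with every estimate made effective. By Cauchy's formula,
$$
\PL(n) \= \frac{1}{2\pi \imag}\oint_{|x|=e^{-1/N_n}} \frac{f(x)}{x^{n+1}}\,dx,
$$
where the radius $e^{-1/N_n}$ is chosen so that $t_0 = 1/N_n$ is the saddle point of $\log f(e^{-t}) + nt \approx A/t^2 + nt$; at this saddle the exponent equals $3AN_n^2$, matching the dominant exponential in the asserted formula. Parametrizing $x = e^{-1/N_n + \imag\theta}$ for $\theta \in [-\pi,\pi]$, I would split the circle into a major arc $|\theta| \leq \theta_0$ and a minor arc $|\theta| > \theta_0$, with $\theta_0$ a carefully chosen small power of $N_n^{-1}$.

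First I would develop an effective asymptotic expansion for $\log f(e^{-t})$ as $t \to 0^+$ of the form
$$
\log f(e^{-t}) \= \frac{A}{t^2} \+ c \m \frac{1}{12}\log t \m \sum_{s\geq 1}\alpha_s t^{2s} \+ R(t),
$$
with an explicit exponentially small bound on $R(t)$. This comes from Mellin-Barnes analysis of $\sum_{n,k\geq 1}(n/k)e^{-nkt}$, whose Mellin transform is $\Gamma(s)\zeta(s-1)\zeta(s+1)$: the constants $A=\zeta(3)$ and $c=\zeta'(-1)$ arise from the simple pole at $s=2$ and the double pole at $s=0$, while the $\alpha_s$ of (\ref{alphadefn}) come from the residues produced at negative integers by the trivial zeros of $\zeta(s+1)$ combined with $\Gamma(s)$. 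Exponentiating the resulting identity converts it into an effective expansion of $f(e^{-t})$ whose coefficients are exactly the $\beta_s$ of (\ref{BetaDefn}).

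On the major arc, I would substitute this expansion, shift to the saddle and rescale $\theta$ so that the quadratic part of the exponent becomes $O(1)$. This transforms the integral into Gaussian moments of the form $\int e^{-Ay^2}y^{2m}\,dy$, producing the factors $A^{-m-1/2}\Gamma(m+\tfrac12)$. The Jacobian $dx/x$, the Taylor expansion of $\log f$ in the displacement $y$, and the factor $x^{-n}$ combine to introduce the rational function $(1+y)^{2s+2m+13/12}/(3+2y)^{m+1/2}$, whose $y^{2m}$-coefficient is precisely the $b_{s,m}$ of (\ref{asm}). Truncating both the $\beta$-expansion and the Taylor expansion of the exponent at order $r+1$ yields the double sum in the theorem, together with a major-arc remainder $E_r^{\maj}(n)$ whose absolute value is then bounded explicitly by $\widehat{E}_r^{\maj}(n)$.

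On the minor arc, I would estimate $|f(e^{-1/N_n + \imag\theta})|$ using the same Mellin expansion together with standard lower bounds away from $x=1$, in order to verify that the real part of $A/(1/N_n - \imag\theta)^2 + n(1/N_n - \imag\theta)$ is at most $(3A - \tfrac{2}{5})N_n^2$ whenever $|\theta|>\theta_0$; the hypothesis $n\geq 87$ is what turns this pointwise inequality into the uniform estimate on $E^{\min}(n)$. I anticipate that the principal obstacle will be the effective bookkeeping in the major-arc computation: converting the nested asymptotic expansions into a single explicit bound for $E_r^{\maj}(n)$ requires simultaneously controlling the growth of $\alpha_s$ and $\beta_s$, the contributions of cubic and higher terms from the saddle-point exponent, and the derivative-type factors that appear upon rescaling. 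The thresholds $n_r$ and $\ell_r$ in the hypothesis encode precisely the points at which these individual effective bounds become simultaneously valid.
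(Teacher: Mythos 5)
Your overall architecture is correct and matches the paper's: Cauchy's formula on the circle $|x|=e^{-1/N_n}$, major/minor arc split, effective expansion of $\log f(e^{-z})$, saddle-point on the major arc. The major-arc outline is a legitimate (Mellin--Barnes) alternative to the route the paper actually takes: rather than inverting $\Gamma(s)\zeta(s-1)\zeta(s+1)$, the paper works from Wright's exact identity $-\log f(x)+A/z^2=\int_{\Gamma'}\frac{t\log(1-e^{-tz})}{e^{2\pi it}-1}dt-\int_{\Gamma}\frac{t\log(1-e^{-tz})}{1-e^{-2\pi it}}dt$, deforms the contours to $[0,\pm iw]$, and invokes the product formula for $\sin$ to recover $c$, $\log z/12$, and the $\alpha_s$; the saddle point is then treated by steepest descent along the explicit curve $\mathcal C: (x^2+y^2)^2=x$ with the substitution $t^2=3-2v-v^{-2}$, which is rather more delicate than the flat rescaling of $\theta$ you describe, and is what makes the error constants $C_r$, $D_r$ in $\widehat{E}_r^{\maj}(n)$ computable. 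Either route can in principle be made effective; the paper's is engineered so that every remainder is a one-variable integral over a compact curve.

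The genuine gap is in your minor-arc argument. You propose to bound $|f(e^{-1/N_n+i\theta})|$ by verifying that $\mathrm{Re}\bigl(A/z^2+nz\bigr)\leq(3A-\tfrac{2}{5})N_n^2$ for $|\theta|>\theta_0$, where $z=1/N_n-i\theta$. But the expansion $\log f(e^{-z})=A/z^2+c-\tfrac{1}{12}\log z-\sum\alpha_s z^{2s}+R(z)$ with $R(z)$ exponentially small is only valid for $z$ small (equivalently, for $x$ near $1$): on the minor arc $|\theta|$ ranges up to $\pi$, so $|z|$ is of order $1$, the tail $\sum\alpha_s z^{2s}$ diverges, and the remainder is no longer under control. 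Indeed $f$ has essential singularities at every root of unity, and it is the neighboring singularities, not the one at $x=1$, that govern $|f(x)|$ once $\theta$ is moderate; no amount of care with $A/z^2+nz$ alone can see them. The paper's Proposition~\ref{ZagierHelp} avoids this entirely by using Wright's \emph{exact} inequality $|\log f(x)|\leq\log f(|x|)-\bigl(\tfrac{|x|}{(1-|x|)^2}-\tfrac{|x|}{|1-x|^2}\bigr)$, which holds on the whole circle, and then bounding the two pieces separately: Lemma~\ref{Lemma1} controls $\log f(|x|)$ via Euler--Maclaurin applied to the Lambert series $g_3(e^{-t})=\sum m^2q^m/(1-q^m)$ (this is where Zagier's technique enters, on the real ray $x=|x|$ where the expansion \emph{is} valid), and Lemma~\ref{Lemma2} gives a lower bound $\geq N_n^2/2-1/12$ for the geometric correction via the Law of Cosines and Taylor's theorem. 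The two together give $|f(x)|\leq e^{(A-1/2)N_n^2+0.33N_n}$, whence the claimed bound for $n\geq87$. Without a mechanism like this inequality that sees the whole circle, the minor-arc estimate does not close.
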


\begin{threeremarks}\ \newline
\noindent
(1) The $s=m=0$ term in Theorem~\ref{EffectiveWright}  gives the well-known asymptotic\footnote{It is well-known that Wright has a typographical error where his asymptotic is off by a factor $\sqrt{3}.$}
$$
\PL(n) \sim \frac{(2^{25} A^7)^{\frac{1}{36}}e^c}{\sqrt{12 \pi}\cdot n^{\frac{25}{36}}} \exp\left(\sqrt[3]{\frac{27An^2}{4}}\right).
$$
Theorem~\ref{EffectiveWright} has two explicit error terms. The  error $E^{\min}(n),$ which is independent of $r$ and  arises from ``minor arc'' integrals, is exponentially smaller as $(3A-\frac{2}{5})/(2A)^{\frac{2}{3}}
\approx 1.79 < \sqrt[3]{27A/4} \approx 2.01.$  The error term $E_r^{\maj}(n)$, which arises from ``major arc'' integrals, only offers small power savings in $n$ that improve with larger choices of $r$. In addition to the expected complications required to make error terms explicit, the proof of Theorem~\ref{Thm1} is hampered by the small size of these power savings. This annoying problem does not arise\footnote{This comment for $p(n)$ applies for the Fourier coefficients of all non-positive weight weakly holomorphic modular forms.}
where working with further terms in the circle method gives exponentially improved error terms.
Since the effective bounds must be sufficient to reduce the conjecture to a finite range that can be handled by computer, the task of proving Theorem~\ref{Thm1} is a delicate balance between theory and practicality.
To prove Theorem~\ref{Thm1}, we use the case of $r=2$, where the major arc power savings is on the order of $n^{-\frac{7}{3}},$ and our theoretical bounds are sufficient  to confirm the conjecture
for all $n\geq 8820.$

%\smallskip
%\noindent
%(2) Wright's main asymptotic formula is presented as a single sum, as opposed to the double sum in
%$s$ and $m$ in Theorem~\ref{EffectiveWright}. This double sum formulation is the main device in  Wright's proof of his asymptotic formula.
%He makes a further simplifcation to obtain a single sum expression. We do not take this extra step as it would introduce further
% error.

 \smallskip
 \noindent
 (2) Almkvist \cite{Almkvist} and Govindarajan and Prabhakar \cite{GP} have refined Wright's asymptotic formula in a different way. 
 At the expense of requiring more summands as a function of $n$  (i.e. $\sim \kappa \sqrt[3]{n}$ many summands), their formulas give  precise asymptotics.  Given a positive integer $n$,  choosing $r\sim \kappa \sqrt[3]{n}$ in Theorem~\ref{EffectiveWright} gives similarly strong asymptotics, with the added benefit that the error terms are explicitly bounded.  
 
 \smallskip
 \noindent
 (3) Wright's main asymptotic formula is presented as a single sum, as opposed to the double sum in
$s$ and $m$ in Theorem~\ref{EffectiveWright}. This double sum formulation is the main device in  Wright's proof of his asymptotic formula.
He makes a further simplifcation to obtain a single sum expression. We do not take this extra step as it would introduce further
 error. 
\end{threeremarks}

\begin{examples}\label{IntroExample} 
For all $n\geq105$, the $r=1$ case of Theorem~\ref{EffectiveWright} implies (after some calculation) that
\begin{displaymath}
\begin{split}
\PL(n)=e^{3\cdot2^{-\frac23}A^{\frac13} n^{\frac23}}n^{-\frac{25}{36}} &\Big(\frac{2^{\frac{25}{36}}e^cA^{\frac{7}{36}}}{\sqrt{12\pi}}-\frac{\sqrt{3}\cdot2^{\frac{13}{36}}e^{c} {\left(3 A + 1385\right)} }{25920 \, \sqrt{\pi} A^{\frac{5}{36}}}n^{-\frac{2}{3}}\notag\\&  -\frac{\sqrt{3} \cdot2^{\frac{1}{36}} e^c{\left(1377 A^{2} - 370650  A+ 12525625\right)}}{1567641600  \sqrt{\pi} A^{\frac{17}{36}}}n^{-\frac{4}{3}} + E(n)\Big),\label{r1estimate}
\end{split}
\end{displaymath}
where $|E(n)|\leq 527n^{-\frac{5}{3}}.$
If $\widehat{\PL}_1(n)$ denotes this formula without the error $E(n),$  then we  have
$$E(n):=\left(\PL(n)-\widehat{\PL}_1(n)\right)\cdot e^{-3\cdot2^{-\frac23}A^{\frac13} n^{\frac23}}n^{\frac{25}{36}}.$$
 Table~\ref{table1}  illustrates the observed strength of  the power savings obtained by the $\widehat{\PL}_1(n)$ estimate.

\medskip
\begin{center} 
\begin{small}
\begin{tabular}{|c|c|c|c|}
 \hline
$n$ &$\PL(n)$& $E(n)$ & $527n^{-\frac53}$ \\ \hline \hline
$100$ & $5.92\ldots\times 10^{16}$ &  $-1.18\ldots\times 10^{-7}$ & $0.24\ldots$ \\ \hline
$200$ &$4.06\ldots\times 10^{27}$ & $-3.00\ldots\times 10^{-8}$ & $0.07\ldots$ \\ \hline
$\vdots$ & $\vdots$ & $\vdots$ & $\vdots$\\ \hline
$500$ & $2.91\ldots \times 10^{52}$ & $-4.87\ldots\times 10^{-9}$ & $0.01\ldots$ \\ \hline
\end{tabular}
\captionof{table}{Numerics for $r=1$ case of Theorem~\ref{EffectiveWright}}\label{table1}
\end{small}
\end{center}
\medskip

For $n\geq 87$, explicit calculations\footnote{We leave the details of the proof of the simpler
$r=1$ case  to the reader.} with the $r=2$ case of Theorem~\ref{EffectiveWright}
 gives (see \eqref{r2estimate})
$$\PL(n)=\widehat{\PL}_2(n)+E_2(n),$$
where $\widehat{\PL}_2(n)$ is defined by (\ref{Mainr2}), and 
$|E_2(n)|\leq  \mathcal{E}_2(n):=227e^{3AN_n^2}n^{-\frac{109}{36}}+e^{\left(3A-\frac25\right)N_n^2}.$
The bound $\mathcal{E}_2(n)$ is smaller than $\widehat{\PL}_2(n)$ for $n\geq96.$ 
 Table~\ref{table2} below illustrates the strength of this estimate.
\medskip
\begin{center} 
\begin{small}
\begin{tabular}{|c|c|c|c|}
 \hline
$n$ &$\widehat{\PL}_2(n)-\mathcal E_2(n)$& $\PL(n)$ & $\widehat{\PL}_2(n)+\mathcal E_2(n)$ \\ \hline \hline
$100$ & $5.932\ldots\times 10^{15}$ &  $5.920\ldots\times 10^{16}$ & $1.124\ldots\times 10^{17}$ \\ \hline
$200$ &$3.706\ldots\times 10^{27}$ & $4.066\ldots\times 10^{27}$ & $4.426\ldots\times 10^{27}$ \\ \hline
$\vdots$ & $\vdots$ & $\vdots$ & $\vdots$\\ \hline
$500$ & $2.913\ldots\times 10^{52}$ & $2.915\ldots\times 10^{52}$ & $2.917\ldots\times 10^{52}$ \\ \hline
$1000$ & $3.542\ldots\times 10^{84}$ & $3.542\ldots\times 10^{84}$ & $3.542\ldots\times 10^{84}$ \\ \hline
%$1500$ & $3.2586\ldots\cdot10^{111}$ & $3.2588\ldots\cdot10^{111}$ & $3.2589\ldots\cdot10^{111}$ \\ \hline
%$2000$ & $4.0051\ldots\cdot10^{135}$ & $4.0052\ldots\cdot10^{135}$ & $4.0053\ldots\cdot10^{135}$ \\ \hline
\end{tabular}
\captionof{table}{Numerics for $r=2$ case of Theorem~\ref{EffectiveWright}}\label{table2}
\end{small}
\end{center} 
\end{examples}
\medskip

 In Section~\ref{Wright} we prove Theorem~\ref{EffectiveWright} by modifying Wright's implementation of the circle method.
As is common for most applications of the circle method, the proof follows by considering integrals over ``major'' and ``minor'' arcs.
Our analysis of the major arc contributions is essentially a necessarily lengthy and careful refinement of Wright's original work.
However, our analysis of the minor arc contributions follows a completely different approach, which relies on work of Zagier and  a careful application of Euler-Maclaurin summation.
In Section~\ref{ProofsPart2}
we deduce Theorem~\ref{Thm1} and Theorem~\ref{Thm2} from Theorem~\ref{EffectiveWright}.
Theorem~\ref{Thm2} follows  from recent work
by Griffin, Zagier, and two of the authors in 
 \cite{GORZ}  on Jensen polynomials for suitable arithmetic sequences.

\section*{Acknowledgements} \noindent
The authors thank Kathrin Bringmann, Will Craig, Michael Griffin, Bernhard Heim, Greg Moore, Boris Pioline,  and Wei-Lun Tsai for useful comments. Finally, we are grateful to the anonymous referees for their careful reading of the manuscript and their suggestions.

\section{Effective form of Wright's asymptotic formulas}\label{Wright}

For convenience, we begin by outlining Wright's strategy for obtaining asymptotics for $\PL(n),$ which is a modification of the classical
``circle method.'' 
 For positive integers $n,$ we recall that $N_n=(\frac{n}{2A})^{\frac{1}{3}},$
where $A=\zeta(3)$ as in (\ref{constants}), 
 and we consider the circle
 \begin{equation}\label{circle}
 C_{N_n}:=\left \{x: |x| =e^{-\frac{1}{N_n}}\right \}.
 \end{equation}
Throughout, we let $\theta_x$ denote the principal value of $\arg(x),$ 
and we divide
$C_{N_n}$ into a ``major	 arc''  $C^{'}_{N_n},$ consisting of those $x$ with $|\theta_x|< \frac{1}{N_n},$
and the ``minor arc'' $C_{N_n}^{''}$ which is its complement. 
 
In terms of the generating function $f(x)$ in (\ref{GenFcn}), Cauchy's integral formula immediately gives
\begin{equation}\label{Cauchy}
\PL(n) = J(n)+E^{\min}(n),
\end{equation}
where
\begin{equation}\label{arcs}
J(n):= \frac{1}{2 \pi i} \int_{C_{N_n}^{'}} \frac{f(x)}{x^{n+1}}dx \ \ \ \ \ \ {\text {\rm and}}\ \ \ \ \ \
E^{\min}(n):= \frac{1}{2 \pi i} \int_{C_{N_n}^{''}} \frac{f(x)}{x^{n+1}}dx
\end{equation}
(i.e. with the usual counterclockwise orientation).
Wright analyzes $J(n)$ and $E^{\min}(n)$ separately\footnote{Wright referred to $J(n)$ as $J_1(n)$ (resp. $E^{\min}(n)$ as $J_2(n)$).}. 
The asymptotics arise from the major arc piece $J(n)$, and the minor arc $E^{\min}(n)$ piece
is a small error term.
To prove Theorem~\ref{EffectiveWright}, we improve on Wright's analysis of $E^{\min}(n)$ with a completely different argument, and we meticulously estimate $J(n)$ to obtain explicit estimates.

\subsection{Explicit bounds over the minor arcs}
Here we bound $E^{\min}(n)$  using a different method from that of Wright.
Instead of working directly with $f(x)$, we use its logarithmic derivative, which
 is the generating function for the sums of squares of divisors. Thanks to this interpretation, we make connection with work of  Zagier \cite{Zagier}, and we can then effectively bound $E^{\min}(n)$ using Euler-Maclaurin summation and calculus. 

\begin{proposition}\label{ZagierHelp}
For all $n\geq 87,$ we have
$|E^{\min}(n)|\leq \exp \left( \left(3A-\frac{2}{5}\right)\left(\frac{n}{2A}\right)^{\frac{2}{3}}\right).$
\end{proposition}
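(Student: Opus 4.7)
The strategy is to obtain a uniform pointwise upper bound on $|f(x)/x^{n+1}|$ for $x$ on the minor arc, and then bound $E^{\min}(n)$ by this maximum times the arc length. Writing $x = e^{-1/N_n + i\theta}$ with $1/N_n \leq |\theta| \leq \pi$, we have $|x^{n+1}| = e^{-(n+1)/N_n}$, and since $n = 2A N_n^{3}$ we get $(n+1)/N_n = 2A N_n^2 + 1/N_n$. Hence the proposition reduces to proving
$$
\log|f(x)| \;\leq\; \bigl(A \m \tfrac{2}{5}\bigr) N_n^2 \+ O(1)
$$
uniformly for $|\theta|\geq 1/N_n$, where the circumference factor $2\pi e^{-1/N_n}$ is absorbed into the $O(1)$ margin once $n\geq 87$.

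To produce this pointwise bound, I would start from the expansion
$$
\log f(x) \= -\sum_{m\geq 1} m\log(1-x^m) \= \sum_{m,k\geq 1} \frac{m}{k}\, x^{mk},
$$
valid on $|x|<1$. Taking real parts and subtracting from the corresponding real-axis sum yields
$$
D(\theta) \,:=\, \log f(|x|) \m \log|f(x)| \= \sum_{m,k\geq 1} \frac{m\, e^{-mk/N_n}}{k}\bigl(1-\cos(mk\theta)\bigr) \;\geq\; 0.
$$
A Mellin/Euler--Maclaurin computation in the spirit of \cite{Zagier} applied to $\log f(e^{-1/N_n})$ gives the real-axis asymptotic $A N_n^2 + c + o(1)$, matching the $s=m=0$ leading term of Theorem~\ref{EffectiveWright}. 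The task therefore becomes the quantitative lower bound
$D(\theta) \geq \tfrac{2}{5} N_n^2 + O(1)$
for every $\theta$ with $|\theta|\geq 1/N_n$.

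To establish this lower bound I would apply Euler--Maclaurin to the inner $m$-sum with $k$ fixed, comparing $\sum_{m\geq 1} m\, e^{-mk/N_n}(1-\cos(mk\theta))$ to the explicit integral $\int_0^\infty u\, e^{-uk/N_n}(1-\cos(uk\theta))\,du$, which evaluates in closed form. Summing the resulting expressions over $k$ with weights $1/k$ produces a leading term of order $N_n^2$ whose $\theta$-dependent coefficient is non-decreasing in $|\theta|\in[1/N_n,\pi]$, so its minimum is attained at the boundary $|\theta|=1/N_n$. The main obstacle is precisely this boundary case, in which each summand of $D$ is individually small -- one has $1-\cos(mk/N_n) = O((mk/N_n)^2)$ whenever $mk\ll N_n$ -- so that the required saving materializes only from the collective contribution of many pairs $(m,k)$. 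Tightening the Euler--Maclaurin remainders enough to certify the specific constant $\tfrac{2}{5}$ (rather than some smaller positive quantity that would fail to dominate the $2AN_n^2$ coming from $(n+1)/N_n$) is the central technical difficulty, and the hypothesis $n\geq 87$ is precisely the threshold at which the lower-order corrections fit inside the margin.
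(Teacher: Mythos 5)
Your high-level decomposition is sound and closely parallels the paper's: both start by reducing the minor-arc bound to a pointwise estimate of $\log|f(x)|$, obtain $\log f(|x|) = A N_n^2 + O(N_n)$ via Euler--Maclaurin applied to the log-derivative (this is the content of the paper's Lemma~\ref{Lemma1}, using Zagier's treatment of $g_3$), and then seek a lower bound of size roughly $\tfrac{2}{5}N_n^2$ on the difference between $\log f(|x|)$ and the quantity being estimated. The divergence is in that last step. You propose a \emph{global} Euler--Maclaurin estimate of $D(\theta)=\sum_{m,k}\tfrac{m}{k}e^{-mk/N_n}(1-\cos(mk\theta))$, applying the summation formula to the inner $m$-sum for each $k$ and then summing. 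The paper instead uses Wright's pointwise inequality $|\log f(x)|\le \log f(|x|)-\bigl(\tfrac{|x|}{(1-|x|)^2}-\tfrac{|x|}{|1-x|^2}\bigr)$, which isolates just the $k=1$ term of the difference (discarding the nonnegative tail $k\ge 2$), and then proves Lemma~\ref{Lemma2}, that this single term is $\ge \tfrac{N_n^2}{2}-\tfrac{1}{12}$ on the whole minor arc, by an elementary Law-of-Cosines computation and Taylor bounds on $h(t)=\tfrac{1-e^{-t}}{\sqrt{1+e^{-2t}-2e^{-t}\cos t}}$ --- no Euler--Maclaurin, and no need to track the $\theta$-dependence of the remainder.

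Your version has a genuine gap and also misidentifies where the difficulty lives. The gap: you assert without justification that the Euler--Maclaurin remainders in the $\theta$-dependent estimate can be ``tightened enough,'' and that the $\theta$-coefficient of the leading term is monotone so the minimum sits at $|\theta|=1/N_n$. But the summation remainder $\int_0^\infty F_k'(u)\,\overline{B}_1(u)\,du$ involves the oscillatory factor $\sin(uk\theta)$, and when $k\theta$ is close to a multiple of $2\pi$ there is resonance with the sawtooth $\overline{B}_1$; for $\theta$ of order $1$ the crude bound $\int|F_k'|$ gives a contribution of order $\theta N_n^2/k$, which after summing over $k$ is comparable to the main term. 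So the monotonicity claim and the smallness of the remainder are not free, and these are exactly the points you leave unproved. The misidentified difficulty: you frame the boundary case $|\theta|=1/N_n$ as hard because ``each summand is individually small'' and the saving ``materializes only from the collective contribution of many pairs $(m,k)$.'' In fact the $k=1$ term alone already contributes $\Theta(N_n^2)$ there (your own closed-form integral shows this), which is precisely what the paper exploits: Wright's inequality throws away everything except $k=1$, and Lemma~\ref{Lemma2} gives $\tfrac{N_n^2}{2}-\tfrac{1}{12}$ for that single term uniformly over the minor arc, with no oscillation issues, because it works directly with the explicit rational expressions rather than re-expanding as a double series.
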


\subsubsection{Lemmata for Proposition~\ref{ZagierHelp}}
The next lemma bounds $\log f(|x|)$ on the minor arcs.

\begin{lemma}\label{Lemma1}
If $x\in C_{N_n}^{''},$ then we have
$\log f(|x|)\leq A{N_n}^2+0.33N_n-0.5.$
\end{lemma}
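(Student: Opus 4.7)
The plan is to reduce the statement to a single-variable inequality and then establish it via an elementary $\sinh$ estimate. Since $|x| = e^{-1/N_n}$ is constant on the circle $C_{N_n}$, the condition $x \in C_{N_n}''$ plays no role; setting $t := 1/N_n$, the claim is equivalent to
\[
\log f(e^{-t}) \;\leq\; \frac{A}{t^2} + \frac{0.33}{t} - 0.5.
\]

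First I would derive a convenient series representation of the left-hand side. Starting from $\log f(x) = -\sum_{n\geq 1} n\log(1-x^n)$, expanding $-\log(1-y) = \sum_{k\geq 1} y^k/k$, interchanging summation, and using $\sum_{n\geq 1} n y^n = y/(1-y)^2$ together with the identity $(1-e^{-u})^2 = 4e^{-u}\sinh^2(u/2)$, I obtain
\[
\log f(e^{-t}) \;=\; \sum_{k=1}^{\infty} \frac{1}{k}\cdot\frac{e^{-kt}}{(1-e^{-kt})^2} \;=\; \sum_{k=1}^{\infty} \frac{1}{4k\sinh^2(kt/2)}.
\]

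Next I would apply the elementary inequality $\sinh u \geq u$ for $u \geq 0$ (immediate from the Taylor series $\sinh u = u + u^3/6 + \cdots$) with $u = kt/2$, which gives $4k\sinh^2(kt/2) \geq k^3 t^2$, hence
\[
\log f(e^{-t}) \;\leq\; \frac{1}{t^2} \sum_{k=1}^{\infty} \frac{1}{k^3} \;=\; \frac{\zeta(3)}{t^2} \;=\; A N_n^2.
\]
For $n \geq 87$, one has $N_n = (n/(2A))^{1/3} > 3 > \tfrac{50}{33}$, so $0.33\, N_n - 0.5 > 0$, and the target bound $A N_n^2 + 0.33\, N_n - 0.5$ follows \emph{a fortiori} from the sharper $\log f(e^{-1/N_n}) \leq A N_n^2$.

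The only nontrivial step is the opening series rearrangement (justified by absolute convergence for $t > 0$); everything else is routine, and I do not anticipate any real obstacle. A much sharper estimate of the form $\log f(e^{-t}) = A/t^2 - \tfrac{1}{12}\log t + \zeta'(-1) + O(t^N)$ is available via the Zagier/Euler--Maclaurin machinery the authors invoke later for $E^{\min}(n)$, but such precision is unnecessary here --- the crude bound $A N_n^2$ has ample slack to absorb the linear correction $0.33\, N_n - 0.5$ appearing in the lemma.
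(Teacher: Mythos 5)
Your argument is correct, and it is a genuinely different --- and in fact cleaner --- route than the one the paper takes. The paper works with the logarithmic derivative $g_3(e^{-t})=\sum_{m\geq 1} m^2 q^m/(1-q^m)$, invokes the $N=1$ case of Zagier's Euler--Maclaurin-type Proposition~3 to obtain the pointwise bound $|g_3(e^{-t})|\leq 2A/t^3 + 0.33/t^2$, and then integrates from $t$ to $1$; this machinery is the source of the lower-order terms $0.33 N_n - 0.5$. You instead bound $\log f$ directly, termwise, via the identity $\log f(e^{-t})=\sum_{k\geq 1} \bigl(4k\sinh^2(kt/2)\bigr)^{-1}$ and the elementary estimate $\sinh u\geq u$, which collapses immediately to $\log f(e^{-t})\leq \zeta(3)/t^2 = AN_n^2$. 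Your bound is actually \emph{sharper} than the lemma's whenever $0.33N_n - 0.5\geq 0$, i.e.\ $n\geq 9$, and in particular throughout the range $n\geq 87$ where the lemma is actually invoked (in the proof of Proposition~\ref{ZagierHelp}); so the lemma follows \emph{a fortiori} in the relevant range, as you say. One small caution: the lemma is stated without an explicit lower bound on $n$, and as a literal statement it fails for very small $n$ (e.g.\ at $n=1$ one has $\log f(e^{-1/N_1})\approx 0.53$ but the right-hand side is $\approx 0.42$); the paper's own derivation also tacitly assumes $t=1/N_n\leq 1$, i.e.\ $n\geq 3$, when it replaces $\bigl|\int_t^1 g_3\bigr|$ by $\int_t^1(2A/z^3+0.33/z^2)\,dz$. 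You correctly flag the $n\geq 87$ restriction, so this is not a gap in your argument, but it is worth noting that the termwise $\sinh$ bound (valid for all $t>0$) plus the observation that $0.33N_n-0.5\geq 0$ for $n\geq 9$ is a self-contained and arguably preferable proof in the range of interest.
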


\begin{proof} 
We begin by noting that
\[
\log f(|x|)=\sum_{m\geq1}\frac{|x|^m}{m(1-|x|^m)^2},
\]
which is known as the {\it MacMahon function.}
We let $|x|=:q$ and $t:=1/N_n,$ and so we have $|x|=q=e^{-t}$. 
Taking the derivative, we obtain the Lambert series
\[
L(q):=q\frac{d}{dq}\sum_{m\geq1}\frac{q^m}{m(1-q^m)^2}=\sum_{m\geq1}\frac{q^m(1+q^m)}{(1-q^m)^3}. 
\]
Thanks to the elementary fact that
$X(1+X)/(1-X)^3=\sum_{k=1}^{\infty} k^2 X^k,$ we have that
\[
L(q)=g_3(q):=\sum_{m\geq1}\frac{m^2q^m}{1-q^m}. 
\]

As $q=e^{2\pi i\tau}$ with $\tau=\frac{it}{2\pi}$, we have
$q\frac{d}{dq}=\frac{1}{2\pi i}\frac{d}{d\tau}=-\frac d{dt},$  we have
$\frac{d}{dt}\log f(e^{-t})=-g_3(e^{-t}). $
Integrating this relation and adding the correct constant, we obtain 
\[
\log f(|x|)=\log f(e^{-t})=\int_t^1g_3(e^{-z})dz+\log f(e^{-1}). 
\]
Since $\log f(e^{-1})\approx 1.036$, we can bound this by 
\begin{equation}\label{lftog3}
|\log f(|x|)|=\log f(|x|)\leq \left|\int_t^1g_3(e^{-z})dz\right|+1.04. 
\end{equation}

Estimating the integral in this inequality is more involved.  We make use of Zagier's work \cite{Zagier} on generating functions of arbitrary divisor power sums.  He considers (see p. 15 of \cite{Zagier}) 
 the function $g_3(e^{-z})$ as $z\searrow0$.  In the $k=3$ case  of Example 3, he applies Proposition 3 of \cite{Zagier} with
$F(t):=\frac{t^2}{e^t-1}$ to obtain an asymptotic expansion for 
$g_3(e^{-t})=\frac{1}{t^2}\sum_{m\geq1}F(mt).$

As we need an estimate with explicitly bounded error, as opposed to an  asymptotic expansion, we dig into the proof of Proposition 3 of \cite{Zagier}. This gives, for each $k\geq1$, an exact formula for $g_3(e^{-t})$, where $k$ controls the number of terms in this asymptotic expansion, and leaves an integral that must be analyzed. 
The $k=1$ case\footnote{This is $N=1$ in Zagier's paper.} gives
\[
\sum_{m\geq1}F(mt)=\frac{1}{t}\int_0^{\infty}F(z)dz+\frac{(-1)^0B_1F(0)t^0}{1!}+(-t)^0\int_0^{\infty}\frac{F'(x)\overline{B_1}(x)}{1!}dx,
\]
where $B_1=-1/2$ is the Bernoulli number, and $\overline{B_1}(x)=x-\lfloor x\rfloor-\frac12$ is a periodization of the first Bernoulli polynomial $B_1(x)=x-\frac12$. Since $F(t)$ has a removable singularity at $t=0$ with Taylor expansion
$F(t)=t-\frac12 t^2+\ldots,$ we have
$F(0)=0.$ Moreover, Zagier computed that
\[
\int_0^{\infty}F(z)dz=(3-1)!\zeta(3)=2\zeta(3)=2A. 
\] 
Combining these observations, we obtain
\[
g_3(e^{-t})=\frac{2A}{t^3}-\frac{1}{t^2}\int_0^{\infty}F'(x)(x-\lfloor x\rfloor-1/2)dx. 
\]
As $|x-\lfloor x\rfloor-1/2|\leq\frac12$,  we have
\[
|g_3(e^{-t})|\leq \frac{2A}{t^3}+\frac{1}{2t^2}\int_0^{\infty}|F'(x)|dx
=
 \frac{2A}{t^3}+\frac{1}{2t^2}\int_0^{\infty}\left|\frac{(xe^x-2e^x+2)x}{(e^x-1)^2}\right|dx. 
\]
Since $\int_0^{\infty}|F'(x)| dx\approx 0.6471$, we have
$|g_3(e^{-t})|\leq 2A/t^3+0.33/t^2.$
Therefore, \eqref{lftog3} gives
\begin{equation*}
\begin{aligned}
\log f(|x|)&\leq \int_t^1\left(\frac{2A}{z^3}+\frac{0.33}{z^2}\right)dz +1.04
\leq\frac{A}{t^2}+\frac{0.33}{t}-0.5.
\end{aligned}
\end{equation*}
Letting $t=1/N_n$ gives the lemma.
\end{proof}

The proof of Proposition~\ref{ZagierHelp} also requires the following convenient lower bounds.

\begin{lemma}\label{Lemma2}
If $x\in C_{N_n}^{''},$ then we have
$$
\frac{|x|}{(1-|x|)^2}-\frac{|x|}{|1-x|^2}\geq\frac{N_n^2}2-\frac1{12}.
$$
\end{lemma}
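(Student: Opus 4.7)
The plan is to parametrize the minor arc, reduce the desired bound to a one-variable inequality in $t := 1/N_n$, and then verify that inequality by producing a clean closed form for the relevant power series.

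First, write $q = |x| = e^{-t}$ and $x = qe^{i\theta}$, so $|\theta| \geq t$ because $x \in C_{N_n}''$. Combining the two fractions via $|1-x|^2 = (1-q)^2 + 2q(1-\cos\theta)$ yields
$$
\frac{q}{(1-q)^2} \,-\, \frac{q}{|1-x|^2}
\;=\; \frac{2q^2(1-\cos\theta)}{(1-q)^2\bigl[(1-q)^2 + 2q(1-\cos\theta)\bigr]}.
$$
Regarded as a function of $y := 1-\cos\theta$, this has the form $by/(a(a+by))$ with $a=(1-q)^2>0$, $b=2q>0$, and its derivative $a^2b/(a(a+by))^2$ is strictly positive. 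Hence the expression is monotonically increasing in $y$, so its minimum on the minor arc $|\theta|\in[t,\pi]$ is attained at $|\theta|=t$, where $y = 2\sin^2(t/2)$. Using $(1-q)^2 = 4e^{-t}\sinh^2(t/2)$ reduces the lemma to showing
$$
\frac{1}{4\sinh^2(t/2)} \,-\, \frac{1}{4(\sinh^2(t/2)+\sin^2(t/2))}
\;\geq\; \frac{1}{2t^2} \,-\, \frac{1}{12}.
$$

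The idea is to establish this by splitting into two elementary pieces. The easier piece is $\frac{1}{4(\sinh^2(t/2)+\sin^2(t/2))} \leq \frac{1}{2t^2}$, equivalent to $\sinh^2(t/2)+\sin^2(t/2) \geq t^2/2$; this follows from the identity $\sinh^2 v + \sin^2 v = \tfrac12(\cosh 2v - \cos 2v)$ together with $\cosh(2v)-\cos(2v) = 2\sum_{k\text{ odd}}(2v)^{2k}/(2k)!$, whose Maclaurin coefficients are nonnegative with leading term $(2v)^2$. The harder piece is $\frac{1}{4\sinh^2(t/2)} \geq \frac{1}{t^2} - \frac{1}{12}$, which is trivial when $t\geq 2\sqrt{3}$ since the right side is then nonpositive, and for $0<t<2\sqrt{3}$ reduces, after clearing denominators and using $4\sinh^2(t/2)=2(\cosh t-1)$, to the claim that $g(t) := 6t^2 - (12-t^2)(\cosh t-1) \geq 0$. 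Expanding $\cosh t - 1 = \sum_{m\geq 1} t^{2m}/(2m)!$ and reindexing, the plan is to arrive at the closed form
$$
g(t) \;=\; \sum_{m\geq 2}\frac{4m^2-2m-12}{(2m)!}\,t^{2m}
\;=\; \sum_{m\geq 2}\frac{2(2m+3)(m-2)}{(2m)!}\,t^{2m},
$$
whose coefficient vanishes at $m=2$ and is strictly positive for every $m\geq 3$. This makes $g(t)\geq 0$ on all of $\R$ manifest, and subtracting the second bound from the first gives the lemma.

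The main obstacle, modest as it is, is spotting the factorization $4m^2 - 2m - 12 = 2(2m+3)(m-2)$ that exhibits the coefficients of $g$ as nonnegative; once that is in hand the remainder is routine power-series bookkeeping, and the bound holds uniformly for all $t>0$ without any restriction on $n$.
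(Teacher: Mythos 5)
Your proof is correct, and it takes a genuinely different route from the paper. The paper factors the left side as $\frac{|x|}{(1-|x|)^2}\bigl(1-\bigl(\tfrac{1-|x|}{|1-x|}\bigr)^2\bigr)$, invokes the Law of Cosines to identify the extremal $\theta$ on the minor arc, and then applies Taylor's theorem with numerically verified second-derivative bounds (valid only for $t \leq 1.34$, i.e.\ $n \geq 1$) to each factor separately; in particular the estimate $\frac{e^{-t}}{(1-e^{-t})^2} > t^{-2}-\frac1{12}$ (which is exactly your ``harder piece,'' since $\frac{e^{-t}}{(1-e^{-t})^2}=\frac{1}{4\sinh^2(t/2)}$) is stated with only a cursory appeal to Taylor's theorem. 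You instead combine the two fractions into the single rational expression $\frac{2q^2(1-\cos\theta)}{(1-q)^2[(1-q)^2+2q(1-\cos\theta)]}$, prove monotonicity in $y=1-\cos\theta$ by a one-line derivative computation (making the reduction to $|\theta|=t$ transparent, with no geometry needed), and then dispose of the resulting one-variable inequality by exhibiting both pieces as power series with manifestly nonnegative coefficients, the key step being the factorization $4m^2-2m-12=2(2m+3)(m-2)$. This buys you a fully elementary, numerics-free argument that holds uniformly for all $t>0$, which is cleaner and slightly stronger. One cosmetic slip: the ratio you write ``has the form $by/(a(a+by))$'' is actually $qby/(a(a+by))$; the omitted constant factor $q>0$ does not affect the monotonicity conclusion, but the displayed form should include it.
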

\begin{proof}
We note that
\begin{equation}\label{elemweq}
\frac{|x|}{(1-|x|)^2}-\frac{|x|}{|1-x|^2}=\frac{|x|}{(1-|x|)^2}\left(1-\left(\frac{1-|x|}{|1-x|}\right)^2\right).
\end{equation}
We now estimate
\[
\frac{1-|x|}{|1-x|}=\frac{1-e^{-1/N_n}}{|1-x|}
\]
on $C_{N_n}''$. Recall that this is the arc of the circle of radius $e^{-1/N_n}$ with angles ranging from $1/N_n$ to $2\pi-1/N_n$. Geometrically, we have that the closest a point $x$ on this arc can get to the point $(1,0)$ in the plane is when the angle is $1/N_n$ (or $2\pi-1/N_n$), and so 
\[
\frac{1-|x|}{|1-x|}\leq \frac{1-e^{-1/N_n}}{|1-e^{-1/N_n}e^{i/N_n}|}.
\]
We obtain a formula from the denominator using the Law of Cosines.
Namely, if we draw a triangle (see Figure 2) with sides consisting of the line from $(0,0)$ to $x=e^{-1/N_n}e^{i/N_n}$ on the circle, the line from $(0,0)$ to $(1,0)$, and the line connecting $x$ to $(1,0)$, then the unknown length $|1-x|$ is the opposite side of the angle $1/N_n$ bounded by side lengths $1$ and $e^{-1/N_n}$. 

\smallskip
\smallskip
\begin{center}
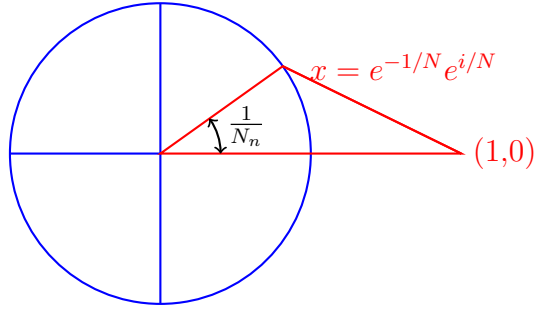

\begin{tikzpicture}
\draw[blue,  thick] (0,0) circle (2);
\draw[-, blue, thick] (-2,0) -- (0,0);
\draw[-, blue, thick] (0,-2) -- (0,2);
\draw[red, thick] (0,0) coordinate (a) 
-- (1.627466,1.162476) coordinate (c) 
-- (4,0) coordinate (b) node[right] {(1,0)}
-- cycle;
\draw[red, thick] (1.627466,1.162476) coordinate (x) node[right]{\ \,$x=e^{-1/N}e^{i/N}$}
-- (4,0) 
pic["$\tiny{{\color{black}\frac{1}{{N_n}}}}$",draw=black,<->,angle eccentricity=1.5,angle radius=.8cm] {angle= b--a--c};
\end{tikzpicture}\\ \vskip.1in
\captionof{figure}{Triangle used to evaluate $|1-x|$}
\end{center}
\medskip
After letting $t=1/N_n,$ these facts imply that
\[
|1-e^{-1/N_n}e^{i/N_n}|=\sqrt{1+e^{-2/N_n}-2e^{-1/N_n}\cos(1/N_n)}=\sqrt{1+e^{-2t}-2e^{-t}\cos(t)}.
\]
Thus, we get that
\[
h(t):=\frac{1-e^{-1/N_n}}{|1-e^{-1/N_n} e^{i/N_n}|}=\frac{1-e^{-t}}{\sqrt{1+e^{-2t}-2e^{-t}\cos(t)}}=\frac1{\sqrt2}+\frac{\sqrt2}{48}t^2+\ldots.
\]
Since $t=1/N_n=(2A/n)^{\frac13}$ and $n\geq1$, we have that $t\leq(2A)^{\frac13}\leq1.34$. On the interval $[0,1.34]$, the maximum absolute value of $h''(t)$ is $\lim_{t\rightarrow0}h''(t)=\frac{\sqrt{2}}{24}$. 
Thus, by Taylor's Theorem, 
\[
|h(t)|=\frac1{\sqrt2}+O_{\leq}\left(\frac{\sqrt2}{48}t^2\right),
\]
where  $O_{\leq}(\cdot)$ means that the expression is bounded by $\cdot$ in absolute value (i.e. the implied constant can be chosen to be 1 with ordinary $O$-notation). 
Hence, for $x\in C_{N_n}''$, we have 
\begin{equation}\label{Lemma2Factor}
\frac{1-|x|}{|1-x|} \leq \frac1{\sqrt2}+O_{\leq}\left(\frac{\sqrt2}{48N_n^2}\right).
\end{equation}
Returning to \eqref{elemweq}, we estimate 
\[
\frac{|x|}{(1-|x|)^2}=
\frac{e^{-1/N_n}}{(1-e^{-1/N_n})^2}
=\frac{e^{-t}}{(1-e^{-t})^2}=t^{-2}-\frac1{12}+\frac{1}{240}t^2+\ldots
\]
By a similar use of Taylor's Theorem, we have the strict inequality 
\[
\frac{e^{-t}}{(1-e^{-t})^2} > t^{-2}-\frac1{12}=N_n^2-\frac1{12}.
\]
Using this bound, (\ref{elemweq}) and (\ref{Lemma2Factor}) completes the proof as
\begin{displaymath}
\begin{aligned}
\frac{|x|}{(1-|x|)^2}-\frac{|x|}{|1-x|^2}&\geq\left(N_n^2-\frac1{12}\right)\left(1-\left(\frac1{\sqrt2}+O_{\leq}\left(\frac{\sqrt2}{48N_n^2}\right)\right)^2\right)
\geq\frac{N_n^2}2-\frac1{12}.
\end{aligned}
\end{displaymath}
\end{proof}

\subsubsection{Proof of Proposition~\ref{ZagierHelp}}
To estimate
\[
E^{\min}(n)= \frac{1}{2 \pi i} \int_{C_{N_n}^{''}} \frac{f(x)}{x^{n+1}}dx,
\]
 we bound $|f(x)/x^{n+1}|$. Following Wright (see page 184 of \cite{Wright}), we note that 
\begin{equation}\label{flfelem}
|\log f(x)|\leq \log f(|x|)-\left(\frac{|x|}{(1-|x|)^2}-\frac{|x|}{|1-x|^2}\right).
\end{equation}
Lemma~\ref{Lemma1} proved that
$\log f(|x|)\leq A{N_n}^2+0.33N_n-0.5.$
Furthermore, Lemma~\ref{Lemma2} establishes that
$\frac{|x|}{(1-|x|)^2}-\frac{|x|}{|1-x|^2}\geq\frac{N_n^2}2-\frac1{12}.$
Therefore, \eqref{flfelem} gives
\begin{equation*}
\begin{aligned}
|\log f(x)|\leq AN_n^2+0.33N_n-0.5-\left(\frac{N_n^2}2-\frac1{12}\right)\leq(A-1/2)N_n^2+0.33N_n,
\end{aligned}
\end{equation*}
and so
$|f(x)|\leq e^{(A-1/2)N_n^2+0.33N_n}.$
Thus, the integrand in $E^{\min}(n)$ is bounded by 
\[
\frac{|f(x)|}{|x|^{n+1}}\leq e^{(A-1/2)N_n^2+0.33N_n+(n+1)/N_n}
=e^{(3A-1/2)N_n^2+0.33N_n+1/N_n}.
\]
Since the integral defining $E^{\min}(n)$  is along a curve of length bounded by the circumference of the whole circle of radius $e^{-1/N_n}$, which is $2\pi e^{-\frac1N_n}$, we finally find that
\[
 |E^{\min}(n)| \leq e^{(3A-1/2)N_n^2+0.33N_n+1/N_n-1/N_n}=e^{(3A-1/2)N_n^2+0.33N_n}.
\]
The claimed inequality for $n\geq 87$ follows  by analyzing this last expression. 

\subsection{Explicit major arc formulas}

The size of $\PL(n)$ is given by the major arc integral $J(n).$ 
To reduce the complexity of error terms,  for each positive integer $r$ we define thresholds
\begin{equation}\label{nrdefn}
n_r:=\min\left\{n\geq 1\ : \  0.056\cdot \sum_{s=1}^{r+1}\left(\frac{s\cdot A^{\frac{1}{3}}}{2^{\frac{7}{6}}n^{\frac{1}{3}}}\right)^{2s}\left(\frac{\pi^2 
n^{\frac{1}{3}}}{(2A)^{\frac{1}{3}}s}+2\right)< 1\right\}
\end{equation}
and 
\begin{equation}\label{ellrdefn}
\ell_r:=\min\left\{n\geq1\ : \ 2^{r+4} \pi^3\alpha_{r+2} N_n^{-2r-4}+5e^{-4.7N_n}<\frac12 \right\}.
\end{equation}

\begin{remark} The thresholds $\ell_r$ and $n_r$ are simple to compute.  For instance, we have that
$\ell_j=1$ for $ j\leq 22$ (resp. $ \ell_j=2$  for $23\leq j\leq 30$),
and
$n_1=1,$ $n_2=2,\dots, n_5=18.$

\end{remark}

The following explicit major arc estimate is the main result of this subsection.

\begin{proposition}\label{MajorArc}
If $r\in \Z^{+},$ then for every $n\geq \max(\ell_r,n_r, 55)$ we have 
$$
J(n)=\frac{e^{c+3AN_n^2}}{2\pi }\sum_{s=0}^{r+1}\sum_{m=0}^{r+1}\frac{(-1)^m\beta_sb_{s,m}\Gamma\left(m+\frac12\right)}{A^{m+\frac12}N_n^{2s+2m+\frac{25}{12}}}+E_{r}^{\maj}(n),
$$
where $|E_r^{\maj}(n)|\leq  \widehat{E}_r^{\maj}(n)$ (see (\ref{ErMaj})).
\end{proposition}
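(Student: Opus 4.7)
The plan is to evaluate $J(n)$ by a careful saddle-point analysis. Parametrizing the major arc by $x = \exp(-(1-iu)/N_n)$ with $u \in [-1,1]$, and using $n/N_n = 2AN_n^2$, the integral becomes
$$J(n) = \frac{1}{2\pi N_n}\int_{-1}^{1} f\bigl(e^{-(1-iu)/N_n}\bigr)\, e^{2AN_n^2(1-iu)}\, du.$$
The key input is an explicit asymptotic expansion of $f(e^{-t})$ as $t\to 0^+$. From $\log f(e^{-t}) = \sum_{N\geq 1}\sigma_2(N)e^{-Nt}/N$ (where $\sigma_2(N)=\sum_{d\mid N}d^2$) and its Mellin transform $\Gamma(s)\zeta(s-1)\zeta(s+1)$, shifting the contour from $\Re(s) > 2$ to a line $-2(r+2) < \Re(s) < -2(r+1)$ collects the residues: $A/t^2$ at $s=2$; $c + \tfrac{\log t}{12}$ at the double pole $s=0$ (after combining $\zeta'(-1)=c$ with the Laurent expansion of $\Gamma(s)\zeta(s+1)$ near $s=0$); and $-\alpha_k t^{2k}$ at the simple poles $s = -2k$ for $1 \le k \le r+1$ (matching (\ref{alphadefn}) via the functional equation together with Euler's formula for $\zeta(2k)$). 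The shifted contour integral gives a remainder of order $t^{2r+4}$, bounded effectively by convexity and Stirling estimates on $|\Gamma(s)\zeta(s \pm 1)|$. Exponentiating and applying (\ref{BetaDefn}) transforms this into
$$f(e^{-t}) = t^{1/12}\, e^{c + A/t^2}\,\Bigl(\sum_{s=0}^{r+1}\beta_s t^{2s} + \mathcal{R}_r(t)\Bigr),$$
with $\mathcal{R}_r(t)$ explicitly bounded.

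Substituting $t = (1-iu)/N_n$ assembles the integrand as $e^{c + AN_n^2\Psi(u)} \cdot ((1-iu)/N_n)^{1/12+2s}$ (times $\beta_s$ and the error), where $\Psi(u) := (1-iu)^{-2} + 2(1-iu)$ satisfies $\Psi(0)=3$, $\Psi'(0)=0$, and $\Psi''(0)=-6$. This confirms the saddle at $u=0$ with leading exponential $e^{c+3AN_n^2}$ and Gaussian width $O(1/N_n)$. Following Wright I then make the change of variables $1-iu = 1+y$, under which a direct computation yields the identity $\Psi(u) - 3 = y^2(3+2y)/(1+y)^2$; in particular the main Gaussian in $y$ has width $\sim 1/(N_n\sqrt{3A})$. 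Expanding the integrand in $y$, the combinatorial bookkeeping of the prefactor $(1+y)^{2s+1/12}$, the Jacobian, and the exponential-correction factor organises itself via the generating function $(1+y)^{2s+2m+13/12}/(3+2y)^{m+1/2}$: its coefficient of $y^{2m}$ is precisely $b_{s,m}$ as in (\ref{asm}), with the $(3+2y)^{m+1/2}$ denominator absorbing the factor $3^{m+1/2}$ that would otherwise arise from the moments of the $3AN_n^2$-Gaussian. Integrating term-by-term against the linearised Gaussian produces $(-1)^m\Gamma(m+\tfrac12)/A^{m+1/2}/N_n^{2m+1}$, with the sign coming from the rotation $y=-iu$, and summing over $0 \leq s, m \leq r+1$ yields the stated main term.

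The principal obstacle is combining four error sources into the single explicit bound $\widehat{E}_r^{\maj}(n)$: (i) the Mellin-truncation remainder $\mathcal{R}_r(t)$; (ii) the truncation of the $\beta_s$-series at $s = r+1$; (iii) the truncation of the $b_{s,m}$-series at $m = r+1$; and (iv) the tail error from extending the $u$-integration to $(-\infty,\infty)$ after the change of variable. Contribution (iv) is exponentially small in $N_n^2$ (controlled by $\ell_r$ in (\ref{ellrdefn})), while (i)--(iii) yield only power-of-$N_n$ savings whose geometric ratios are controlled by $n_r$ in (\ref{nrdefn}) --- precisely the threshold beyond which successive expansion terms have ratio less than one, so that tails sum geometrically. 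The floor $n \geq 55$ ensures $N_n$ is large enough for the crude uniform estimates on $(1-iu)$-powers on the unit disc to hold. Combining all four contributions via the triangle inequality, with careful bookkeeping of numerical constants, delivers the stated $\widehat{E}_r^{\maj}(n)$.
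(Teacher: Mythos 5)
Your proposal has the same overall skeleton as the paper — asymptotic expansion of $\log f(e^{-t})$, exponentiation to get the $\beta_s$-expansion of $f$, then a saddle-point integral producing the $b_{s,m}$-coefficients — and the final combinatorial step (the identity $\Psi(u)-3 = y^2(3+2y)/(1+y)^2$, Wright's substitution $t^2 = 3-2v-v^{-2}$, and the reading-off of $b_{s,m}$ as the coefficient of $y^{2m}$ in $(1+y)^{2s+2m+13/12}/(3+2y)^{m+1/2}$) matches the paper's, which follows Wright. However, the first stage genuinely diverges. The paper does \emph{not} use a Mellin transform on the major arcs: it starts from Wright's exact Abel--Plana-type identity
\[
-\log f(x)+\frac{A}{z^2}
=\int_{\Gamma'}\frac{t\log(1-e^{-tz})}{e^{2\pi i t}-1}\,dt
-\int_{\Gamma}\frac{t\log(1-e^{-tz})}{1-e^{-2\pi i t}}\,dt,
\]
truncates the contours at $\pm iw$ (Lemma~\ref{LemmaGamma}), combines the two integrals into $\int_0^w y\log(2\sin(yz/2))/(e^{2\pi y}-1)\,dy$, and then uses the sine product formula to split this into $\mathcal I_1,\mathcal I_2,\mathcal I_3$ (Lemma~\ref{I_integralbounds}). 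Your Mellin route — shifting the contour of $\Gamma(s)\zeta(s+1)\zeta(s-1)t^{-s}$ and collecting residues at $s=2,0,-2,\dots$ — produces the same expansion $A/z^2+c+\tfrac1{12}\log z-\sum\alpha_s z^{2s}$ (and your residue computations check out), and is arguably cleaner conceptually; the trade-off is that explicit bounds on the shifted-contour remainder require Stirling/convexity estimates on vertical lines, uniformly in $\arg t\in[-\pi/4,\pi/4]$, which is nontrivial to make fully explicit. Notably, the paper actually reserves the Mellin/Euler--Maclaurin philosophy (via Zagier) for the \emph{minor} arcs, so your proposal would unify the two arcs' methodology — a genuine stylistic difference.

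Two points where the proposal is off in its bookkeeping. First, you attribute the threshold $\ell_r$ to the tail error (iv) from extending the $u$-integration. In the paper $\ell_r$ (see~\eqref{ellrdefn}) controls the $\log f$-expansion error $2^{r+4}\pi^3\alpha_{r+2}N_n^{-2r-4}+5e^{-4.7N_n}$ coming from $\mathcal I_3$ and $\mathcal I_1$ — this is your contribution (i), not (iv), and it is \emph{not} exponentially small in $N_n^2$ (it has a polynomial-in-$N_n$ part). The tail-extension error in the saddle point is Wright's $O_\leq(0.64\cdot 2^s e^{2AN_n^2})$ term in~\eqref{PsEstimate}, which is unconditionally exponentially small and needs no threshold. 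Similarly, $n_r$ in~\eqref{nrdefn} is the threshold for the incomplete-Gamma tail bound in $\mathcal I_2$, not a ``successive-ratio'' condition. Second, the $(-1)^m$ arises because $t^2 = -y^2(3+2y)/(1+y)^2$ has a minus sign — so $t^{2m}\propto(-1)^m y^{2m}$ and $a_{s,2m}=(-1)^m b_{s,m}/2\pi$ — rather than directly from the rotation $y=-iu$ as you phrase it. Finally, since $\widehat E_r^{\maj}(n)$ in~\eqref{ErMaj} is defined \emph{in terms of} the intermediate constants $\mathcal X_r,\mathcal Y_r,\mathcal Z_r$ that emerge from the paper's specific route, your Mellin approach would yield an effective bound of a different shape; you would need a separate derivation to prove the proposition as stated with that particular $\widehat E_r^{\maj}(n)$.
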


\subsubsection{Lemmata for Proposition~\ref{MajorArc}}

For fixed $n,$  Wright sets
\begin{equation}\label{z}
z=\log\left(\frac1x\right)=\log\left|\frac1x\right|-i\vartheta=\log\left (e^{\frac1N_n}\right)-i\vartheta=\frac{1}{N_n}-i\vartheta=:\rho e^{i\phi},
\end{equation}
and he defines
\begin{equation}\label{w}
w:=\mathrm{Re}\left(\frac{\pi}{2z}\right)=\frac{\pi \cos\phi}{2\rho}. 
\end{equation} 
On the major arc $C_{N_n}'$, we have $|\vartheta|<\frac1N_n$, and so 
\begin{equation}\label{rho}
\rho=\sqrt{N_n^{-2}+\vartheta^2}\in\left[\frac{1}{N_n},\frac{\sqrt2}{N_n}\right].
\end{equation}
Furthermore, we have
\begin{equation}\label{phi}
|\phi|=|\arctan(\vartheta N_n)|\leq\arctan(1)=\frac{\pi}{4}. 
\end{equation}

Wright uses the basic integral identity (see (3.17) of \cite{Wright})
\[
\int_0^{\infty}t\log(1-e^{-tz})dt=-\frac{A}{z^2}, 
\]
which implies 
\[
-\log f(x)+\frac{A}{z^2}=-\log f(x)-\int_0^{\infty}t\log(1-e^{-tz})dt.
\]
Using the generating function for $f(x)$, this becomes
\[
\sum_{m\geq1}m\log(1-e^{-mz})-\int_0^{\infty}t\log(1-e^{-tz})dt.
\]
Now since 
$\frac{1}{e^{2\pi i t}-1}-\frac{1}{1-e^{-2\pi it}}=-1,$ 
this can be written as 
$$
-\log f(x)+\frac{A}{z^2}=\sum_{m\geq1}m\log(1-e^{-mz})+
\int_{\Gamma}\left(\frac{t\log(1-e^{-tz})}{e^{2\pi i t}-1}-\frac{t\log(1-e^{-tz})}{1-e^{-2\pi i t}}\right)dt,
$$
where $\Gamma$ is the path from $0$ to $\infty$ which travels along the real axis, apart from sufficiently small semicircles at the positive integers above the real axis to avoid the poles of the integrand. 
\smallskip
\smallskip
\begin{center}
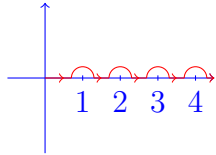

\begin{tikzpicture}
\draw[->,blue] (-0.5,0) -- (2.25,0) coordinate (x axis);
\draw[->,blue] (0,-1) -- (0,1) coordinate (y axis);
\draw[blue] (0.5 cm,1pt) -- (0.5 cm,-1pt) node[anchor=north,fill=white] {$1$};
\draw[blue] (1 cm,1pt) -- (1 cm,-1pt) node[anchor=north,fill=white] {$2$};
\draw[blue] (1.5 cm,1pt) -- (1.5 cm,-1pt) node[anchor=north,fill=white] {$3$};
\draw[blue] (2 cm,1pt) -- (2 cm,-1pt) node[anchor=north,fill=white] {$4$};
\draw[red] (0.65,0)  arc[radius = 1.5mm, start angle= 0, end angle= 180];
\draw[red] (1.15,0)  arc[radius = 1.5mm, start angle= 0, end angle= 180];
\draw[red] (1.65,0)  arc[radius = 1.5mm, start angle= 0, end angle= 180];
\draw[red] (2.15,0)  arc[radius = 1.5mm, start angle= 0, end angle= 180];
\draw[->, red] (0,0) -- (0.25,0);
\draw[-, red] (0.23,0) -- (0.35,0);
\draw[->,red] (0.65,0)--(0.75,0);
\draw[-,red](0.73,0)--(0.85,0);
\draw[->,red] (1.15,0)--(1.25,0);
\draw[-,red] (1.23,0)--(1.35,0);
\draw[->,red] (1.65,0)--(1.75,0);
\draw[-,red] (1.73,0)--(1.85,0);
\draw[->,red] (2.15,0)--(2.25,0);
\end{tikzpicture}\\ \vskip.05in
\captionof{figure}{The path of integration $\Gamma$}
\end{center}
\medskip

 Letting $\Gamma'$ be the reflection of the path $\Gamma$ across the $y$-axis, 
using  the Residue Theorem (note that the residue of $\frac{t\log(1-e^{-tz})}{e^{2\pi i t}-1}$ at $t=m\in\Z$ is $m\log(1-e^{-mz})/(2\pi i)$,  
Wright expresses\footnote{We note that Wright's notation does not clearly indicate that this is an exact identity.} (see (3.18) of \cite{Wright})  this as 
\begin{equation}\label{important}
-\log f(x)+\frac{A}{z^2}=
\int_{\Gamma'}\frac{t\log(1-e^{-tz})}{e^{2\pi i t}-1}dt
-\int_{\Gamma}\frac{t\log(1-e^{-tz})}{1-e^{-2\pi i t}}dt.
\end{equation}
To obtain an effective estimate for $f(x)$, we study these two integrals. 

\begin{lemma}\label{LemmaGamma} Assuming the notation and hypotheses above, we have
\begin{displaymath}
\begin{split}
\int_{\Gamma}\frac{t\log(1-e^{-tz})}{1-e^{-2\pi i t}}dt&=\int_0^{iw}\frac{t\log(1-e^{-tz})}{1-e^{-2\pi i t}}dt+O_{\leq}\left(35N_n^2e^{-\pi^2N_n}\right),\\
\int_{\Gamma^{'}} \frac{t \log \, (1-e^{-tz})}{e^{2 \pi i t}-1} dt&=\int_0^{-iw}\frac{t\log(1-e^{-tz})}{e^{2\pi i t}-1}dt+O_{\leq}\left(N_n^2e^{-\pi^2N_n}\right).
\end{split}
\end{displaymath}
\end{lemma}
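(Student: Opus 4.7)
The plan is to prove both identities by contour deformation via Cauchy's theorem. In each case, I would replace the infinite path $\Gamma$ (respectively $\Gamma'$) by the finite segment $[0,iw]$ (respectively $[0,-iw]$), with the difference given by an integral along a horizontal tail at height $w$ (respectively $-w$) extending to infinity, which I then bound explicitly.

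Concretely for the first identity, I would consider the closed contour obtained from $\Gamma$ by appending, at its far-right endpoint, a vertical segment from the real axis up to height $w$, then the horizontal segment $\{\mathrm{Im}(t)=w\}$ traversed leftward back to $iw$, then the segment $[iw,0]$. The integrand $t\log(1-e^{-tz})/(1-e^{-2\pi it})$ is meromorphic in the open strip $0<\mathrm{Im}(t)<w$: its possible singularities are the simple poles of $(1-e^{-2\pi it})^{-1}$ at $t\in\Z^+$, which lie on the real axis and so outside this strip, and the branch points of $\log(1-e^{-tz})$ at $t=2\pi ik/z$ for $k\in\Z$. A direct calculation with $z=1/N_n-i\vartheta$ shows that the closest such branch point above the real axis has imaginary part $2\pi/(N_n\rho^2)=4w$, safely above the horizontal segment at height $w$. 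The vertical closing segment at $\mathrm{Re}(t)=+\infty$ contributes zero in the limit by the exponential decay $|e^{-tz}|=e^{-\mathrm{Re}(t)/N_n-w\vartheta}\to0$. Cauchy's theorem then gives
\[
\int_\Gamma\frac{t\log(1-e^{-tz})}{1-e^{-2\pi it}}\,dt-\int_0^{iw}\frac{t\log(1-e^{-tz})}{1-e^{-2\pi it}}\,dt=\int_0^\infty\frac{(s+iw)\log(1-e^{-(s+iw)z})}{1-e^{-2\pi i(s+iw)}}\,ds,
\]
and the second identity is obtained by the analogous deformation of $\Gamma'$ into $[0,-iw]$ together with a horizontal tail at height $-w$ extending to $-\infty-iw$, again staying in a region free of poles and branch points.

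The remaining task is to bound the horizontal-tail integrals. The key estimates are $|1-e^{-2\pi i(s+iw)}|^{-1}\leq(e^{2\pi w}-1)^{-1}$, which follows from $|e^{-2\pi i(s+iw)}|=e^{2\pi w}$ and the reverse triangle inequality; the modulus bound $|s+iw|\leq s+w$; and $|\log(1-e^{-(s+iw)z})|\leq 2|e^{-(s+iw)z}|=2e^{-s/N_n-w\vartheta}$ valid once $|e^{-(s+iw)z}|\leq 1/2$, combined with a uniform constant bound on the short initial portion of the tail where $e^{-(s+iw)z}$ is merely bounded (using $|w\vartheta|\leq\pi/2$ and checking directly that $e^{-(s+iw)z}$ stays uniformly away from the value $1$). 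Since $2\pi w=\pi^2/(N_n\rho^2)$, integrating $s+w$ against $e^{-s/N_n}$ yields a factor of order $N_n^2$, and $(e^{2\pi w}-1)^{-1}$ provides the claimed exponential factor; careful bookkeeping of constants then delivers the stated bounds.

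The main obstacle is the precise piecewise estimation of $\log(1-e^{-tz})$ along the horizontal tail so as to produce explicit numerical constants ($35$ for the $\Gamma$ case and $1$ for the $\Gamma'$ case) rather than a qualitative $O$-bound; the gap between the two constants presumably reflects more favorable oscillation along the contour for $\Gamma'$. A minor technical point is justifying the vanishing of the closing segment at $+\infty$, which follows straightforwardly from the exponential decay of $|e^{-tz}|$ as $\mathrm{Re}(t)\to+\infty$.
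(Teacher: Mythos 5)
Your overall strategy is essentially the same as the paper's (which follows Wright): deform $\Gamma$ into $[0,iw]$ plus a horizontal tail at height $w$ via Cauchy's theorem, pull out the factor $(1-e^{-2\pi it})^{-1}$, which on the tail has modulus at most $(e^{2\pi w}-1)^{-1}$, and then estimate $|t\log(1-e^{-tz})|$ along the tail, treating a bounded initial segment and the far tail differently. The differences are cosmetic: the paper changes variables to $v=tz$ and works along a fixed ray $L$ (splitting at $|v-iwz|=\pi/\sqrt 8+1$ and using the DLMF bound $|\log(1+y)|\le -\log(1-|y|)$), whereas you stay in the $t=s+iw$ variable and split where $|e^{-(s+iw)z}|\le 1/2$; and you supply a justification of the contour deformation (locating the poles at $t\in\Z^+$ and the nearest branch points at $\mathrm{Im}(t)=4w$) that the paper simply attributes to Wright. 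Two cautionary points. First, the hard content of this lemma is precisely the explicit constants $35$ and $1$, which you explicitly defer; the paper gets them by maximizing $|\log|1-e^{-v_t}||$ over the compact piece using critical-point analysis, and by a closed-form bound on the far tail, and that bookkeeping cannot be waved away. Second, be careful with the exponential factor: $2\pi w=\pi^2/(N_n\rho^2)$ and $\rho^2\in[N_n^{-2},2N_n^{-2}]$, so $2\pi w\ge \pi^2 N_n/2$ only, not $\pi^2 N_n$; hence $(e^{2\pi w}-1)^{-1}$ gives $e^{-\pi^2 N_n/2}$ at the worst point $|\vartheta|=1/N_n$, and your claim that this "provides the claimed exponential factor" $e^{-\pi^2 N_n}$ is not justified as stated (the paper's corresponding step, bounding by $|1-e^{\pi^2 N_n}|^{-1}$, appears to suffer from the same issue).
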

\begin{proof}
By Cauchy's Theorem, Wright showed that the integral over $\Gamma$
(see p. 182 of \cite{Wright}) is 
\begin{equation}\label{intgamma}
\int_{\Gamma}\frac{t\log(1-e^{-tz})}{1-e^{-2\pi i t}}dt=\int_0^{iw}\frac{t\log(1-e^{-tz})}{1-e^{-2\pi i t}}dt+\int_{iw}^{iw+\infty}\frac{t\log(1-e^{-tz})}{1-e^{-2\pi i t}}dt,
\end{equation}
where in the second integral on the right hand side the path is horizontal and parallel to the $x$-axis. 
To estimate the absolute value of the integrand, note that by the reverse triangle inequality (letting $x\in(0,\infty)$ so that $iw+x$ denotes a typical point on the path of integration)
\[
\left|\frac{1}{1-e^{-2\pi i (iw+x)}}\right|\leq\left|\frac{1}{1-|e^{-2\pi i(iw+x)}|}\right|=\left|\frac{1}{1-e^{\pi^2\cos\phi/\rho}}\right|.
\]
Using \eqref{rho}, this is bounded by
$\left|1/(1-e^{\pi^2N_n})\right|\leq1.007e^{-\pi^2N_n}.$
To derive this bound, we used the fact that  $N_n\geq N_1\approx0.75$  is monotonically increasing in $n$ and 
$e^{-\pi^2 N_1} |1-e^{\pi^2N_1}|\approx 0.99936.$
Therefore, we have
\[
\left|\int_{iw}^{iw+\infty}\frac{t\log(1-e^{-tz})}{1-e^{-2\pi i t}}dt\right|\leq1.007e^{-\pi^2N_n}\int_{iw}^{iw+\infty}\left|t\log(1-e^{-tz})\right|dt.
\]
Following Wright again, and making the change of variables $v=tz$, this is bounded by 
\begin{equation}
\label{IntegralonL}
\begin{aligned}
1.007e^{-\pi^2N_n}\int_{L}\frac{\left|t\log(1-e^{-v})\right|}{|z|^2} dv
 \leq1.007N_n^2e^{-\pi^2N_n}\int_{L}\left|v\log(1-e^{-v})\right|dv =: U(n),
\end{aligned}
\end{equation}
where $L$ is the ray from $v=\frac{\pi i}{2}e^{i\phi}\cos\phi$ that forms the angle $\phi$ with the (positive) real axis. 

We split the integral in \eqref{IntegralonL} into two pieces. Wright also does this, but we make a slightly different choice below to assist us in our goal of obtaining effective estimates.  Throughout, let $v_t:=iwz+te^{i\phi}$, so that 
$L=\{v_t \ : \  t\in[0,\infty)\}.$
Noting that 
\begin{equation}\label{wzform}
wz=\frac{\pi\cos\phi}{2\rho}\cdot\rho e^{i\phi}=\frac{\pi \cos\phi e^{i\phi}}2,
\end{equation}
we first estimate the piece $|\log(1-e^{-v_t})|$ in the integrand of \eqref{IntegralonL}.
We use (4.5.6) of DLMF \cite{NIST}, which states that for complex arguments $y$, to obtain
\begin{equation}\label{NIST456}
|\log(1+y)|\leq-\log(1-|y|),\quad\text{ when }|y|<1.
\end{equation}
We will break up the line $L$ into a compact piece $L_1$ and a remaining piece $L_2,$ where we can utilize this bound. To see where it applies, we compute
\begin{equation}\label{evt}
|e^{-v_t}|=e^{-\operatorname{Re}(v_t)}=e^{w\operatorname{Im}(z)-\operatorname{Re}(te^{i\phi})}=e^{-w\vartheta-t\cos\phi}=e^{-\cos\phi\left(\frac{\pi\vartheta}{2\rho}+t\right)}.
\end{equation}
In turn, this is less than or equal to one if and only if 
$\cos\phi\left(\frac{\pi\vartheta}{2\rho}+t\right)\geq0.$
Using \eqref{rho} and \eqref{phi}, and using  $\vartheta\geq-1/N$,
we have 
\[
\cos\phi\left(\frac{\pi\vartheta}{2\rho}+t\right)>\frac{\sqrt2}{2}\left(-\frac{\pi}{2\sqrt2}+t\right)=-\frac{\pi}{4}+\frac{t}{\sqrt{2}},
\]
which gives
\begin{equation}\label{evtbound}
|e^{-v_t}|\leq e^{\frac{\pi}{4}-\frac{t}{\sqrt{2}}}.
\end{equation}
Hence, if we let
\[
L_1:=\left\{v_t\colon t\in[0,\pi/\sqrt8+1)\right \},\quad L_2:=\left \{v_t\colon t\in[\pi/\sqrt8+1,\infty)\right\},
\]
then we can use (\ref{NIST456}) to estimate the integrand on $L_2.$ Thanks to \eqref{evtbound}, for $t>\pi/\sqrt8$ we have
\[
|v_t\log(1-e^{-v_t})|\leq-|v_t|\cdot\log\left(1-|e^{-v_t}|\right)\leq-|v_t|\log\left(1-e^{\frac{\pi}{4}-\frac{t}{\sqrt2}}\right).
\]
Noting that 
$|v_t|\leq|\cos\phi|\frac{\pi}{2}+t\leq\frac{\pi}2+t,$
and using the bound $-\log(1-x)<x/(1-x)$ for $0\neq x<1$ on real-valued logarithms from (4.5.2) of DLMF \cite{NIST}, we  find
\[
\left|v_t\log(1-e^{-v_t})\right|\leq
\frac{\left(\frac{\pi}2+t\right)\cdot e^{\frac{\pi}{4}-\frac{t}{\sqrt{2}}}}{1-e^{\frac{\pi}4-\frac{t}{\sqrt{2}}}}.
\]

Since $L_1$ is compact, we then find the following estimate for \eqref{IntegralonL} (note that when we integrate on $L_2$, since we are integrating absolute values, the change of variables in the differential goes away as it has absolute value 1):
\begin{equation*}
\begin{aligned}
&U(n)=1.007N_n^2e^{-\pi^2N_n}\int_{L_1}\left|v\log(1-e^{-v})\right|dv
+1.007N_n^2e^{-\pi^2N_n}{N_n^2}\int_{L_2}\left|v\log(1-e^{-v})\right|dv
\\
&\ \ \ \ \leq
1.007N_n^2e^{-\pi^2N_n}\left(\frac{\pi}{\sqrt8}+1\right)\cdot\max\left\{|v_t|\cdot|\log(1-e^{-v_t})|\ \colon t\in[0,\pi/\sqrt8+1)\right\}
\\
&
 \ \ \ \ \ \ \ \ \ \ +1.007N_n^2e^{-\pi^2N_n}\int_{\frac{\pi}{\sqrt8}+1}^{\infty}\frac{\left(\frac{\pi}2+t\right)\cdot e^{\frac{\pi}{4}-\frac{t}{\sqrt{2}}}}{1-e^{\frac{\pi}4-\frac{t}{\sqrt{2}}}}dt
\\
&\ \ \ \ \leq1.007N_n^2e^{-\pi^2N_n}\left(\frac{\pi}{\sqrt8}+1\right)\cdot\max\left\{|v_t|\cdot|\log(1-e^{-v_t})| \colon t\in[0,\pi/\sqrt8+1)\right\}+4.8N_n^2e^{-\pi^2N_n}.
\end{aligned}
\end{equation*}

Now we estimate $\log(1-e^{-v_t})$ on for $t\in[0,\pi/\sqrt{8}+1)$.  We begin  by 
recalling that for complex $y$, the principal branch of the logarithm is given by 
$\log(y)=\log(|y|)+i\arg(y).$
Thus, we have
\begin{equation}\label{LogFinalBound}
|\log(1-e^{-v_t})\left|\leq|\log|1-e^{-v_t}|\right|+\pi.
\end{equation}
To bound the logarithm, we find the maximum and minimum on the interval $t\in[0,\pi/\sqrt8+1)$. 
The only critical point of $|1-e^{-v_t}|$ is at 
$t=\cosh(i\phi). $
Thus, the potential extrema of $|1-e^{-v_t}|$ are at $0$, $\pi/\sqrt8$, and $\cosh(i\phi)$. 
That is, the maximum of $|\log|1-e^{-v_t}||$ is bounded by 
$$\max\left\{|1-e^{-v_0}|, \ |1-e^{-v_{\pi/\sqrt8+1}}|, \ |1-e^{-v_{\cosh(i\phi)}}|\right\}.$$
We evaluate these in turn. 
For instance, we have that 
$|1-e^{-v_0}|=\left|1-e^{-\frac{\pi i\cos\phi e^{i\phi}}2}\right|.$
On the interval $\phi\in[-\pi/4,\pi/4]$, we have
$0.75\leq |1-e^{-v_0}|\leq 1.85$ and
$0.9\leq |1-e^{-v_{\pi/\sqrt8+1}}|\leq 1.4.$
Similarly, we find
$0.7\leq\left|1-e^{-v_{\cosh(i\phi)}}\right|\leq 1.45.$
Combining these observations, we have
$\left|\log|1-e^{-v_t}|\right|\leq0.615\ldots$
on $[0,\pi/\sqrt8+1)$. 
Thus, by \eqref{LogFinalBound}, we have
$|\log(1-e^{-v_t})|\leq3.76.$
To bound $|v_t|$ on this interval, using \eqref{wzform} and the triangle inequality, we find that
$|v_t|\leq\frac{\pi\cos\phi}{2}+t\leq\frac{\pi}{2}+\frac{\pi}{\sqrt8}+1=3.68\ldots.$
Therefore, we conclude that
\begin{equation*}
\begin{aligned}
&1.007N_n^2e^{-\pi^2N_n}\left(\frac{\pi}{\sqrt8}+1\right)\max\left\{|v_t|\cdot|\log(1-e^{-v_t})\colon t\in[0,\pi/\sqrt8+1)\right\}
\leq30N_n^2e^{-\pi^2N_n}.
\end{aligned}
\end{equation*}
As a consequence, we obtain
\[
1.007N_n^2e^{-\pi^2N_n}\int_{L}\left|v\log(1-e^{-v})\right|dv\leq35N_n^2e^{-\pi^2N_n}.
\]

Returning to \eqref{intgamma}, we have shown that 
\begin{equation}\label{intgammaest}
\int_{\Gamma}\frac{t\log(1-e^{-tz})}{1-e^{-2\pi i t}}dt=\int_0^{iw}\frac{t\log(1-e^{-tz})}{1-e^{-2\pi i t}}dt+O_{\leq}\left(35N_n^2e^{-\pi^2N_n}\right).
\end{equation}
This bounds the second integral in (\ref{important}), which is the first claim in the lemma.

The second claim in the lemma follows by
arguing as above (after suitable sign changes in the integrand) using the path of integration along $\Gamma'.$  Namely, we get
\begin{equation}\label{intgammapest}
\int_{\Gamma^{'}} \frac{t \log \, (1-e^{-tz})}{e^{2 \pi i t}-1} dt=\int_0^{-iw}\frac{t\log(1-e^{-tz})}{e^{2\pi i t}-1}dt+O_{\leq}\left(N_n^2e^{-\pi^2N_n}\right).
\end{equation}
\end{proof}

We require  bounds for  three  integrals,  two of which make use of the $\alpha_s$ defined by
(\ref{alphadefn}). 

\begin{lemma}\label{I_integralbounds}
Assuming the notation and hypotheses above, the following are true.

\noindent
(1) If $n$ is a positive integer, then we have
$$
\mathcal{I}_1:=2\int_0^w\frac{y\log(yz)}{e^{2\pi y}-1}dy=c+\frac{\log z}{12}+O_{\leq}\left(3e^{-4.7N_n}\right).
$$

\noindent
(2) If $n\geq n_r$ is an integer, then we have
$$
\mathcal{I}_2:=-2\sum_{s=1}^{r+1}\frac{\zeta(2s)z^{2s}}{s(2\pi)^{2s}}\int_0^w\frac{y^{2s+1}}{e^{2\pi y}-1}dy= -\sum_{s=1}^{r+1}\alpha_sz^{2s}+O_{\leq} \left(e^{-\frac{\pi^2 N_n}{2}}\right).
$$

\noindent
(3) If $n$ is a positive integer, then we have
$$
\mathcal{I}_3:=
-2\sum_{s\geq r+2}\frac{\zeta(2s)z^{2s}}{s(2\pi)^{2s}}\int_0^w\frac{y^{2s+1}}{e^{2\pi y}-1}dy\leq 2^{r+4} \pi^3\alpha_{r+2} N_n^{-2r-4}.
$$
\end{lemma}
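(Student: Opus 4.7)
The plan is to reduce all three estimates to the Mellin identity $\int_0^\infty y^k/(e^{2\pi y}-1)\,dy = \Gamma(k+1)\zeta(k+1)/(2\pi)^{k+1}$ together with either tail bounds based on $(e^{2\pi y}-1)^{-1}\leq 2e^{-2\pi y}$ for $y$ bounded away from zero, or Taylor-remainder estimates. Throughout I will freely use the bounds $|z|=\rho\leq \sqrt{2}/N_n$, $w\geq \pi N_n/4$, and crucially $|z|w=\pi\cos(\phi)/2\leq \pi/2$, all following from \eqref{rho}--\eqref{w}.

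For $\mathcal{I}_1$, I split $\log(yz)=\log y+\log z$. The two infinite-range integrals produce the main term: $2\int_0^\infty y\log y/(e^{2\pi y}-1)\,dy=c$ is precisely the defining identity for $c$ in \eqref{constants}, while $2\int_0^\infty y/(e^{2\pi y}-1)\,dy = 2\Gamma(2)\zeta(2)/(2\pi)^2=1/12$ gives the coefficient of $\log z$. For the tail $2\int_w^\infty y\log(yz)/(e^{2\pi y}-1)\,dy$ I replace $(e^{2\pi y}-1)^{-1}$ by $2e^{-2\pi y}$ and integrate $y e^{-2\pi y}$ and $y\log y\,e^{-2\pi y}$ against the exponential weight by parts. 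Since $|\log z|\leq \tfrac12\log 2+\log N_n$ grows only polynomially, while $2\pi w\geq \pi^2 N_n/2>4.7 N_n$, the tail is absorbed into $3 e^{-4.7 N_n}$.

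For $\mathcal{I}_2$, I split each $\int_0^w=\int_0^\infty-\int_w^\infty$ termwise. The infinite integral $\int_0^\infty y^{2s+1}/(e^{2\pi y}-1)\,dy = \Gamma(2s+2)\zeta(2s+2)/(2\pi)^{2s+2}$, together with the definition \eqref{alphadefn} of $\alpha_s$, immediately collapses the main contribution to $-\sum_{s=1}^{r+1}\alpha_s z^{2s}$. Each remaining tail is bounded by $e^{-2\pi w}$ times an explicit polynomial in $w$ of degree $2s+1$ via repeated integration by parts; after multiplying by $|z|^{2s}\leq 2^s/N_n^{2s}$ and summing over $s\in\{1,\ldots,r+1\}$, the threshold $n_r$ of \eqref{nrdefn}, whose exact form is engineered to dominate this summed tail, guarantees that the overall error is at most $e^{-\pi^2 N_n/2}$.

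The estimate for $\mathcal{I}_3$ is the main obstacle, since the naive bound $\int_0^w\leq \int_0^\infty$ applied termwise yields the \emph{divergent} asymptotic series $\sum_{s\geq r+2}\alpha_s|z|^{2s}$. I sidestep this by first interchanging the order of summation: expanding $\zeta(2s)=\sum_{n\geq 1}n^{-2s}$ and applying Fubini gives
\[
\mathcal{I}_3 = -2\sum_{n\geq 1}\int_0^w \frac{y}{e^{2\pi y}-1}\sum_{s\geq r+2}\frac{1}{s}\!\left(\frac{yz}{2\pi n}\right)^{\!2s}\,dy.
\]
Because $|yz/(2\pi n)|\leq |z|w/(2\pi n)\leq 1/(4n)\leq 1/4$ on the range, the inner sum is a convergent Taylor tail of $-\log(1-u^2)$ and is bounded in absolute value by $(16/15)|u|^{2r+4}/(r+2)$. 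Substituting this bound, then estimating $\int_0^w y^{2r+5}/(e^{2\pi y}-1)\,dy\leq \Gamma(2r+6)\zeta(2r+6)/(2\pi)^{2r+6}$ and performing the $n$-sum $\sum_{n\geq 1}n^{-2r-4}=\zeta(2r+4)$, the expression collapses to $(16/15)|z|^{2r+4}\alpha_{r+2}$, and the bound $|z|^{2r+4}\leq 2^{r+2}/N_n^{2r+4}$ produces $(2^{r+6}/15)\,\alpha_{r+2}/N_n^{2r+4}$, well within the claimed $2^{r+4}\pi^3\alpha_{r+2}/N_n^{2r+4}$.
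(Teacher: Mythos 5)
Your treatments of $\mathcal{I}_1$ and $\mathcal{I}_2$ follow the paper's decomposition $\int_0^w=\int_0^\infty-\int_w^\infty$, with the Mellin-type identities supplying the main terms $c+\tfrac{\log z}{12}$ and $-\sum\alpha_s z^{2s}$. The one soft spot is in $\mathcal{I}_2$: you bound the tail $\int_w^\infty y^{2s+1}/(e^{2\pi y}-1)\,dy$ by integration by parts, whereas the paper uses the incomplete-Gamma estimate from Lemma~2.2 of \cite{CFT} together with the Bernoulli-number bound for $\zeta(2s)$, and the threshold $n_r$ in \eqref{nrdefn} is defined as the crossover point for precisely \emph{that} chain of inequalities. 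Invoking $n_r$ as ``engineered to dominate this summed tail'' is therefore circular unless you verify that your integration-by-parts polynomial is actually majorized by the expression appearing in \eqref{nrdefn}; a priori you may need a slightly different threshold. For $\mathcal{I}_3$ your argument is a genuine, and cleaner, variant: the paper bounds $\zeta(2s)\le\zeta(2r+4)$ uniformly, factors out $\rho^{2r+4}/(2\pi)^{2r+4}$, re-sums the resulting geometric series inside the integral, and then extends the range to $[0,\infty)$ while dropping the factor $1/(1-(y\rho/2\pi)^2)\le 16/15$ (and also an overall factor of $2$); you instead expand $\zeta(2s)$ as a Dirichlet series, apply Tonelli, recognize the inner $s$-sum as the tail of $-\log(1-u^2)$ with $|u|\le 1/(4n)$, and track both the $16/15$ and the factor of $2$ explicitly, arriving at the sharper $\frac{2^{r+6}}{15}\alpha_{r+2}N_n^{-2r-4}$. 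This both confirms the paper's stated bound $2^{r+4}\pi^3\alpha_{r+2}N_n^{-2r-4}$ and exposes that the paper's intermediate inequalities are off by a large (but harmless) factor.
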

\begin{proof}
We estimate these three quantities one-by-one.
For $\mathcal I_1,$ we have 
\[
\mathcal I_1=2\int_0^w\frac{y\log(yz)}{e^{2\pi y}-1}dy=2\int_0^\infty\frac{y\log(yz)}{e^{2\pi y}-1}dy-2\int_w^\infty\frac{y\log(yz)}{e^{2\pi y}-1}dy
=c+\frac{\log z}{12}-2\int_w^\infty\frac{y\log(yz)}{e^{2\pi y}-1}dy.
\]
To evaluate the last integral, we first note that 
$w=\frac{\pi\cos\phi}{2\rho}\geq\frac{\pi\sqrt2}{4\rho}\geq\frac{\pi N_n}{4}\geq0.58,$
where we used \eqref{phi} and  \eqref{rho} and the fact that $n\geq1$. By direct manipulation, for $y\geq0.58$, we have 
\begin{equation}\label{ExpEst}
\frac{1}{e^{2\pi y}-1}\leq1.1\cdot e^{-2\pi y}.
\end{equation}
Then on the interval $w\geq0.58$, we find that 
\begin{equation*}
\begin{aligned}
&\left|-2\int_w^\infty\frac{y\log(yz)}{e^{2\pi y}-1}dy\right|
\leq 2.2\left|\int_w^{\infty}y\log(yz)e^{-2\pi y}dy\right|\\
&\ \ \ \ \ \ \ \ \ \ \ \ =\frac{2.2}{4\pi^2}\cdot e^{-2\pi w}\left|1-e^{2\pi w}\operatorname{Ei}(-2\pi w)+(1+2\pi w)\log(wz)\right|,
\end{aligned}
\end{equation*}
where $\operatorname{Ei}(x):=-\int_{-x}^{\infty}\frac{e^{-t}dt}{t}$. 
Straightforward manipulation then gives
\begin{equation}\label{Splitting}
\left|-2\int_w^\infty\frac{y\log(yz)}{e^{2\pi y}-1}dy\right|\leq e^{-2\pi w}\left(1+0.056\cdot(1+2\pi w)(|\log \rho|+\pi)\right).
\end{equation}
If $n\geq7$, then $\sqrt 2/N_n<1$, and so $w\geq \pi N_n/4.$ Therefore,  \eqref{Splitting} is bounded from above by
\[
e^{-\frac{\pi^2N_n}{2}}\left(1+0.056\cdot\left(1+\frac{\pi^2N_n}{2}\right)(\log N_n+\pi)\right).
\]
It is straightforward to show that this is less than or equal to 
$3e^{-4.7N_n}.$
By direct computation for $1\leq n\leq6$ using \eqref{Splitting}, we find that this bound holds in general, and so we have
\begin{equation}\label{Dec16Eqn}
\mathcal I_1=c+\frac{\log z}{12}+O_{\leq}\left(3e^{-4.7N_n}\right).
\end{equation}

 Using \eqref{ExpEst} and the integral representation of $\zeta(s),$ we can manipulate $\mathcal{I}_2$ to obtain
\begin{equation*}
\begin{aligned}
\mathcal I_2&=-2\sum_{s=1}^{r+1}\frac{\zeta(2s)z^{2s}}{s(2\pi)^{2s}}\int_0^\infty\frac{y^{2s+1}}{e^{2\pi y}-1}dy+2\sum_{s=1}^{r+1}\frac{\zeta(2s)z^{2s}}{s(2\pi)^{2s}}\int_w^\infty\frac{y^{2s+1}}{e^{2\pi y}-1}dy
\\
&=
-2\sum_{s=1}^{r+1}\frac{\zeta(2s)z^{2s}}{s(2\pi)^{2s}}\int_0^\infty\frac{y^{2s+1}}{e^{2\pi y}-1}dy
+O_{\leq}\left(
2.2\cdot \sum_{s=1}^{r+1}\frac{\zeta(2s)z^{2s}}{s(2\pi)^{2s}}\int_w^\infty y^{2s+1}e^{-2\pi y}dy
\right)
\\
&=
-2\sum_{s=1}^{r+1}\frac{\zeta(2s)z^{2s}}{s(2\pi)^{2s}}\left(\frac{\Gamma(2+2s)\zeta(2+2s)}{(2\pi)^{2+2s}}\right)
+O_{\leq}
\left(
2.2\cdot \sum_{s=1}^{r+1}\frac{\zeta(2s)z^{2s}}{s(2\pi)^{2s}}\left(\frac{\Gamma(2+2s;2\pi w)}{(2\pi)^{2+2s}}\right)
\right),
\end{aligned}
\end{equation*}
where $\Gamma(a;x):=\int_x^{\infty}t^{a-1}e^{-t}dt$ is the {\it incomplete Gamma function}.
Using \eqref{alphadefn}, this is
\[
\mathcal{I}_2=-\sum_{s=1}^{r+1}\alpha_sz^{2s}+O_{\leq}
\left(
2.2\sum_{s=1}^{r+1}\frac{\zeta(2s)z^{2s}}{s(2\pi)^{2s}}\left(\frac{\Gamma(2+2s;2\pi w)}{(2\pi)^{2+2s}}\right)
\right).
\]
To estimate this, we note that the proof of Lemma~2.2 of \cite{CFT} shows, for $a>2,$ that
\[
\Gamma(a;x)\leq \frac{(x+b_a)^a-x^a}{ab_a}e^{-x},
\]
where $b_a:=\Gamma(a+1)^{\frac{1}{a-1}}$.
Combined with the Bernoulli number formula for $\zeta(s)$ at positive even integers, this gives
\[
\left|2.2\sum_{s=1}^{r+1}\frac{\zeta(2s)z^{2s}}{s(2\pi)^{2s}}\left(\frac{\Gamma(2+2s;2\pi w)}{(2\pi)^{2+2s}}\right)\right|
\leq 
\frac{1.1 e^{-2\pi w}}{4\pi^2}\sum_{s=1}^{r+1}\frac{\rho^{2s}B_{2s}}{s (2\pi)^{2s}(2s)!}\left(2\pi w+\left[(2s+1)!\right]^{\frac{1}{2s+1}}\right)^{2s+1}
.
\]
A simple manipulation bounds this by 
\[
\frac{1.1 \cdot e^{-2\pi w}}{4\pi^2}\sum_{s=1}^{r+1}\frac{z^{2s}B_{2s}}{s (2\pi)^{2s}(2s)!}\left(2\pi w+2s\right)^{2s+1}.
\]
Using the Bernoulli number upper bound from (24.9.8) of \cite{NIST}, recalling that $w\geq \pi N_n/4$, and noting that 
$w=\frac{\pi\cos\phi}{2\rho}<\frac{\pi}{2\rho}<\frac{\pi N_n}{2},$
this is bounded by
\begin{equation*}
\begin{aligned}
0.056\cdot &e^{-2\pi w}\sum_{s=1}^{r+1}\frac{\rho^{2s}}{s(2\pi)^{4s}}(2\pi w+2s)^{2s+1}
=0.056\cdot e^{-2\pi w}\sum_{s=1}^{r+1}\rho^{2s}\left(\frac{1}{2\pi}+\frac{s}{2\pi^2}\right)^{2s}\left(\frac{2\pi w}{s}+2\right)
\\
&\leq
0.056\cdot e^{-\frac{\pi^2N_n}2}\sum_{s=1}^{r+1}\left(\frac{s\sqrt2}{4N_n}\right)^{2s}\cdot\left(\frac{\pi^2N_n}{s}+2\right)
=n_r\cdot e^{-\frac{\pi^2 N_n}{2}}.
\end{aligned}
\end{equation*}
Therefore, if  $n\geq n_r$, then we obtain the claimed inequality for $\mathcal{I}_2.$

Finally, we turn to the bound for $\mathcal{I}_3$, which we recall is
$$
\mathcal{I}_3 := -2\sum_{s=r+2}^{\infty} \frac{\zeta(2s)z^{2s}}{s (2 \pi)^{2s}} \int_{0}^{w} \frac{y^{2s+1}}{e^{2 \pi y}-1}dy.
$$
As $\zeta(2r+4)\geq \zeta(2s)$ for all $s\geq r+2,$ we have
$$
|\mathcal{I}_3| \leq  \frac{\zeta(2r+4)}{r +2} \frac{\rho^{2r+4}}{(2 \pi)^{2r+4}} \int_{0}^{w} \frac{y^{2r+5}}{e^{2 \pi y} -1}   \sum_{s=0}^{\infty} \left(\frac{y\rho}{2 \pi}\right)^{2s} dy.
$$
Therefore, using (\ref{alphadefn}) we find that
\begin{equation*}\begin{aligned}
|\mathcal{I}_3| 
&\leq
\frac{\zeta(2r+4)}{r+2}\cdot\frac{\rho^{2r+4}}{(2 \pi)^{2r+4}}  \int_{0}^{w} \frac{y^{2r+5}}{(e^{2 \pi y}-1) \left(1-(\frac{y \rho}{2 \pi})^2\right)}dy  
\\
& <  \frac{\zeta(2r+4)}{r+2}\cdot\frac{\rho^{2r+4}}{(2 \pi)^{2r+4}} \int_0^{\infty} \frac{y^{2r+5}}{e^{2 \pi y }-1} dy 
\\
& < 
\frac{\Gamma(2r+6)\zeta(2r+4)\zeta(2r+6)\rho^{2r+4}}{(r+2)(2 \pi)^{2r+7}}
=
4\pi^3\alpha_{r+2}\rho^{2r+4}.
\end{aligned}
 \end{equation*}
 Since, $\rho \leq \frac{\sqrt{2}}{N_n},$ we obtain the claimed inequality for $\mathcal{I}_3.$
\end{proof}

\subsubsection{Proof of Proposition~\ref{MajorArc}}
We begin by recalling (\ref{important}), which asserts that
$$
-\log f(x)+\frac{A}{z^2}=
\int_{\Gamma'}\frac{t\log(1-e^{-tz})}{e^{2\pi i t}-1}dt
-\int_{\Gamma}\frac{t\log(1-e^{-tz})}{1-e^{-2\pi i t}}dt.
$$
By combining the two integrals as a single integral, Lemma~\ref{LemmaGamma} gives
\begin{equation}\label{NextStep}
\log\left( f(x)\right) =\frac{A}{z^2}+\int_0^w\frac{y\log\left(2\sin\left(\frac{yz}{2}\right)\right)}{e^{2\pi y-1}}dy+O_{\leq}\left(36N_n^2e^{-\pi^2N_n}\right).
\end{equation}
To use this formula, we employ the identity
$\sin\tau=\tau\prod_{m\geq1}\left(1-\frac{\tau^2}{m^2\pi^2}\right),$
which  implies that
\[
\log(\sin(\tau))=\log\tau-\sum_{s\geq1}\sum_{m\geq1}\frac{\tau^{2s}}{s\cdot m^{2s}\pi^{2s}}=\log\tau-\sum_{s\geq1}\frac{\zeta(2s)\tau^{2s}}{s\pi^{2s}}.
\]
Hence, for every $r\geq1,$ the integral in (\ref{NextStep}) satisfies\footnote{Wright refers to $\mathcal I_1$, $\mathcal I_2$, and $\mathcal I_3$ as $I_4$, $S_1$, and $S_2$, respectively, on page 183 of \cite{Wright}.}
\begin{equation*}
\begin{aligned}
&\mathcal{I}_1+\mathcal{I}_2+\mathcal{I}_3=\int_0^w\frac{y\log\left(2\sin\left(\frac{yz}{2}\right)\right)}{e^{2\pi y-1}}dy
\\
&\ \ \ =
2\int_0^w\frac{y\log(yz)}{e^{2\pi y}-1}dy
-2\sum_{s=1}^{r+1}\frac{\zeta(2s)z^{2s}}{s(2\pi)^{2s}}\int_0^w\frac{y^{2s+1}}{e^{2\pi y}-1}dy
-2\sum_{s\geq r+2}\frac{\zeta(2s)z^{2s}}{s(2\pi)^{2s}}\int_0^w\frac{y^{2s+1}}{e^{2\pi y}-1}dy.
\end{aligned}
\end{equation*}
Thanks to Lemma~\ref{I_integralbounds}, for $n\geq n_r,$
 (\ref{NextStep}) gives\footnote{This is an explicit form of the first equation on p.~184 of \cite{Wright}. We  correct a typographical error where the sum on $s$ accidentally starts with $s=0$ instead of $s=1.$}
\begin{equation}\label{AllTogether}
\begin{aligned}
\log f(x)
=\frac{A}{z^2}+c+\frac{\log z}{12}-\sum_{s=1}^{r+1}\alpha_sz^{2s}
+O_{\leq}\left(2^{r+4} \pi^3\alpha_{r+2} N_n^{-2r-4}+5e^{-4.7N_n}\right).
\end{aligned}
\end{equation}

We now make use of a complex-analytic version for the remainder terms  of the Taylor series of $f(x),$ which is required as our estimates make use of the expressions involving $\beta_s$ as opposed to $\alpha_s$.
Specifically, the $\beta_s$ were defined in \eqref{BetaDefn} to be the initial $r+2$ Taylor coefficients of  
\[
g_r(z):=e^{-\sum_{s=1}^{r+1}\alpha_sz^{s}}=\sum_{s=0}^{r+1}\beta_sz^{s}+R_r(z),
\]
 where $R_r(z)$ is the remainder. 
For convenience, we assume that $n\geq55$, which guarantees that
$|z|=\rho\leq\sqrt2 N_n^{-1}\leq\frac12.$
The standard complex Taylor series remainder estimate  (for example, see Theorem~B.21 of \cite{Knapp} with $R=1$)
gives
\[
|R_r(z)|\leq\frac{\max_{|z|=1}(|g_r(z)|)\cdot|z|^{r+2}}{1-|z|}\leq C_r|z|^{r+2},
\]
where
\begin{equation}\label{Cr}
C_r:=2\max_{|z|=1} \left(\left|e^{-\sum_{s=1}^{r+1}\alpha_sz^{s}}\right|\right).
\end{equation}
By replacing $z$ with $z^2$ (this is allowed since we demanded that $|z|<1$, and so $|z^2|<1$ is still in the range of validity for the remainder estimate), we obtain 
\[
g_r(z^2)=e^{-\sum_{s=1}^{r+1}\alpha_sz^{2s}}=\sum_{s=0}^{r+1}\beta_sz^{2s}+O_{\leq}\left(C_r|z|^{2r+4}\right).
\]
Therefore, by exponentiating \eqref{AllTogether}, for $n\geq 55$ we obtain
$$
f(x)=e^cz^{\frac1{12}}e^{\frac{A}{z^2}}\left(\sum_{s=0}^{r+1}\beta_sz^{2s}+O_{\leq}\left(C_r|z|^{2r+4}\right)\right)
\cdot  O_{\leq}\left(\exp(2^{r+4} \pi^3\alpha_{r+2} N_n^{-2r-4}+5e^{-4.7N_n})\right).
$$
To address the error term on the far right above, we assume that $n\geq \ell_r,$ which by \eqref{ellrdefn} gives
\[
2^{r+4} \pi^3\alpha_{r+2} N_n^{-2r-4}+5e^{-4.7N_n}\leq\frac12< 1\ \text{ and }\ \frac{1}{1-(2^{r+4} \pi^3\alpha_{r+2} N_n^{-2r-4}+5e^{-4.7N_n})}\leq2.
\]
Thanks to (4.5.11) of \cite{NIST}, which states that $e^x<1+x/(1-x)$ for $x<1$, this gives
\begin{displaymath}
\begin{split}
f(x)&=e^cz^{\frac1{12}}e^{\frac{A}{z^2}}\left(\sum_{s=0}^{r+1}\beta_sz^{2s}+O_{\leq}\left(C_r|z|^{2r+4}\right)\right)
\cdot  O_{\leq}\left(1+2^{r+5} \pi^3\alpha_{r+2} N_n^{-2r-4}+10e^{-4.7N_n}\right)\\
&=M(x)+ \mathcal{X}_r(n)+\mathcal{Y}_r(n),
\end{split}
\end{displaymath}
where we let 
$M(x):=e^{c+\frac{A}{z^2}}\sum_{s=0}^{r+1}\beta_sz^{2s+\frac{1}{12}},$
and where 
\begin{equation}\label{Er}
\mathcal{X}_r(n):=e^{c+AN_n^2}2^{r+\frac{49}{24}}C_rN_n^{-2r-\frac{49}{12}}
\end{equation}
and
\begin{equation}\label{Fr}
\mathcal{Y}_r(n):=\left|e^{c+AN_n^2}\left(2^{r+5} \pi^3\alpha_{r+2} N_n^{-2r-4}+10e^{-4.7N_n}\right)\left(2^{r+\frac{49}{24}}C_rN_n^{-2r-\frac{49}{12}}+\sum_{s=0}^{r+1}2^{s+\frac{1}{24}}\beta_sN_n^{-2s-\frac{1}{12}}\right)\right|.
\end{equation}
This encodes the compilation of error
on $C_{N_n}'$ using the facts that
$\left|e^{\frac{A}{z^2}}\right|\leq e^{\left|\frac{A}{\rho^2}\right|}\leq e^{AN_n^2}$
and $|z|=\rho\leq\sqrt 2N_n^{-1}$. 
Now, recalling that $n=2AN_n^3$, we obtain
\begin{equation*}\begin{aligned}  \label{JApprox}
J(n)&=\frac{1}{2\pi i}\int_{\frac{1-i}{N_n}}^{\frac{1+i}{N_n}}f(e^{-z})e^{2AN_n^3z}dz
=\frac{1}{2\pi i}\int_{\frac{1-i}{N_n}}^{\frac{1+i}{N_n}}(M(x)+\mathcal X_r(n)+\mathcal Y_r(n))e^{2AN_n^3z}dz
\\
&
=\frac{e^c}{2\pi i}\int_{\frac{1-i}{N_n}}^{\frac{1+i}{N_n}}\left(\sum_{s=0}^{r+1}\beta_sz^{2s+\frac{1}{12}}\right)e^{\frac{A}{z^2}+2AN_n^3z}dz
+O_{\leq}\left(\frac{(\mathcal X_r(n)+\mathcal Y_r(n))\cdot e^{2AN_n^2}}{N_n \pi}\right),
\end{aligned}
\end{equation*} 
where we used that on the path of integration, $|e^{2AN_n^3z}|=e^{2AN_n^2},$ and that the length is $2N_n^{-1}$.
We now let $v=N_n z,$ and introduce Wright's
$$
P_s:= \frac{1}{2 \pi i} \int_{1-i}^{1+i} v^{2s+\frac{1}{12}} \exp \left(AN_n^2\left(2v+\frac{1}{v^2}\right)\right)dv,
$$ 
to find that 
\begin{equation}\label{JApprox}
\begin{aligned}
J(n)&=e^c\cdot\sum_{s=0}^{r+1}\frac{\beta_sP_s}{N_n^{2s+\frac{13}{12}}}
+O_{\leq}\left(\frac{(\mathcal X_r(n)+\mathcal Y_r(n))\cdot e^{2AN_n^2}}{N_n \pi}\right).
\end{aligned}
\end{equation} 

To complete the proof we require an explicit version of Wright's expansion of $P_s$, that he obtained via the method of steepest descent.  We follow Wright in this regard. Using his notation, we first let $\mathcal C$ be the plane curve defined by the equation $(x^2+y^2)^2=x$, together with the labelled points $E=(2^{-2/3},2^{-2/3})$, $D=(2^{-2/3},-2^{-2/3})$ on $\mathcal C$ and the points $O=(0,0)$, $G=(1,1)$, and $F=(1,-1)$ in the plane. This is illustrated in the 
following figure. 
\smallskip
\begin{center}
\includegraphics[height=60mm]{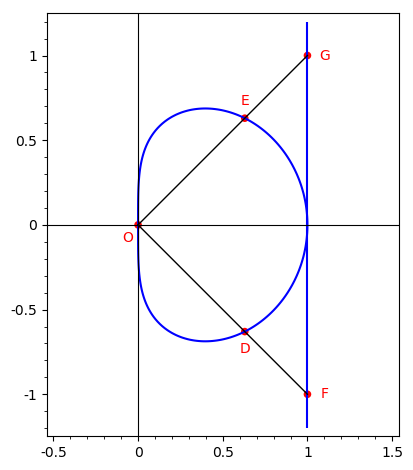}
\captionof{figure}{The curve $\mathcal C$ and the points $E,D,O,G,F$.}
\end{center}
\smallskip
%Sage code to produce this picture:
% implicit_plot((x^2+y^2)^2-x,(x,-0.5,1.5),(y,-1,1),axes='true')+implicit_plot(x-1,(x,-0.5,1.5),(y,-1.2,1.2),axes='true')+point([1,1],color='red',size=30)+point([1,-1],color='red',size=30)+point([2^(-2/3),2^(-2/3)],
%color='red',size=30)+point([2^(-2/3),-2^(-2/3)],color='red',size=30)+point([0,0],color='red',size=30)+line([(0,0), (1,1)],color='black')+line([(0,0), (1,-1)]
%,color='black')+text('O',[-0.06,-0.08],color='red')+text('E',[2^(-2/3),2^(-2/3)+0.1],color='red')+text('D',[2^(-2/3),-2^(-2/3)-0.1],color='red')+text('G',[1.1,1],color='red')+text('F',[1.1,-1],color='red')
Wright noted that if $\xi_s(v):=(2\pi i)^{-1}v^{2s+1/12}\exp(AN_n^2(2v+1/v^2))$, then since $|v|\leq\sqrt2$, $x\leq 1$ on $OG$ and $OF$, then we have
\[
|\xi_s(v)|=\frac{|v|^{2s+\frac12}}{2\pi}e^{2AN_n^2x}\leq\frac{2^{s+\frac14}e^{2AN_n^2}}{2\pi},\quad (v\in OF\cup OG),
\]
and  he cleverly showed\footnote{This corrects a typo of Wright's concerning the path of integration of the fourth integral on the right hand side. Wright accidentally wrote $\int_O^F$ instead of $\int_O^D$.} (see p. 186 of \cite{Wright}), in terms of the arc lengths in the diagram, that
\begin{equation}\label{PsEstimate}
\begin{aligned}
P_s&=\int_\mathcal C\xi_s(v)dv+O_{\leq}\left(\left|\int_D^F\xi_s(v)dv\right|+\left|\int_G^E\xi_s(v)dv\right|+\left|\int_E^O\xi_s(v)dv\right|+\left|\int_O^D\xi_s(v)dv\right|\right)
\\
&=\int_\mathcal C\xi_s(v)dv+O_{\leq}\left(\frac{2^{s+\frac14}e^{2AN_n^2}}{2\pi}\left(\len(DF)+\len(GE)+\len(EO)+\len(OD)\right)\right)
\\
&=\int_\mathcal C\xi_s(v)dv+O_{\leq}\left(0.64\cdot 2^s\cdot e^{2AN_n^2}\right).
\end{aligned}
\end{equation}
Making the change of variables $t^2=3-2v-v^{-2}$ to estimate the integral $\int_\mathcal C\xi_s(v)dv,$ we have
\[
\int_\mathcal C\xi_s(v)dv=e^{3AN_n^2}\int_\R\chi_s(t)e^{-AN_n^2t^2}dt,
\]
where 
\[
\chi_s(t)=\frac{v^{2s+\frac{37}{12}}\cdot t}{2\pi i(1-v^3)}=\frac{v^{2s+\frac{25}{12}}\sqrt{2v+1}}{2\pi(v^2+v+1)}
\]
is a smooth function on $\mathcal C.$
Expanding $\chi_s(t)=\sum_{m\geq0}a_{s,m}t^m$, we can estimate the Taylor remainder for all $t\in\mathbb R$ by 
\[
\chi_s(t)=\sum_{m=0}^{2r+3}a_{s,m}t^m+O_\leq\left(D_r\cdot |t|^{2r+4}\right),
\]
where 
\begin{equation}\label{Dr}
D_r:=\frac{1}{(2r+4)!}\cdot\max\left\{
\max\left\{\left|\chi_s^{(2r+4)}(t)\right|\right\}_{t\in\R}
\right\}_{s=0}^{r+1}.
\end{equation}
We note that $D_r$ is  explicitly computable, as $\chi_s(t)$ is a smooth function on the compact curve $\mathcal C.$ 
Thus, we find  that\footnote{This corrects a typo in Wright's work, where he drops the factor $e^{3AN_n^2}$ temporarily in the final displayed equations of page 187.}
\begin{equation*}
\begin{aligned}
\int_\mathcal C\xi_s(v)dv
& =e^{3AN_n^2}\sum_{m=0}^{2r+3}\int_\R a_{s,m}t^me^{-AN_n^2t^2}dt
+O_{\leq}\left(D_r\cdot e^{3AN_n^2}\int_\R |t|^{2r+4}e^{-AN_n^2t^2}dt\right)
\\
&=e^{3AN_n^2}\sum_{m=0}^{2r+3}\int_\R a_{s,m}t^me^{-AN_n^2t^2}dt+O_{\leq}\left(D_r\cdot\Gamma\left(r+\frac52\right)(AN_n^2)^{-\frac52-r}e^{3AN_n^2}\right).
\end{aligned}
\end{equation*}
Plugging into \eqref{PsEstimate} gives
\begin{equation*}
\begin{aligned}
P_s
=e^{3AN_n^2}\sum_{m=0}^{r+1}\frac{a_{s,2m}\Gamma\left(m+\frac12\right)}{(AN_n^2)^{m+\frac12}}+O_{\leq}\left(D_r\cdot\Gamma\left(r+\frac52\right)(AN_n^2)^{-\frac52-r}e^{3AN_n^2}+0.64\cdot2^{s}e^{2AN_n^2}\right).
\end{aligned}
\end{equation*}
Finally, Wright proved that 
$a_{s,2m}=(-1)^mb_{s,m}/2\pi ,$
 for $s\leq r+1,$ and so we obtain
\begin{equation*}\label{PsFullExpansion}
P_s=e^{3AN_n^2}\sum_{m=0}^{r+1}\frac{(-1)^mb_{s,m}\Gamma\left(m+\frac12\right)}{2\pi\cdot(AN_n^2)^{m+\frac12}}+O_{\leq}\left(D_r\cdot\Gamma\left(r+\frac52\right)(AN_n^2)^{-\frac52-r}e^{3AN_n^2}+0.64\cdot2^{r+1}e^{2AN_n^2}\right).
\end{equation*}
Plugging this into \eqref{JApprox} gives
\begin{equation}\label{FinalApprox}
\begin{aligned}
J(n)&=\frac{e^{c+3AN_n^2}}{2\pi }\sum_{s=0}^{r+1}\sum_{m=0}^{r+1}\frac{(-1)^m\beta_sb_{s,m}\Gamma\left(m+\frac12\right)}{A^{m+\frac12}N_n^{2s+2m+\frac{25}{12}}}
+
O_{\leq}\Bigg(
\frac{(\mathcal X_r(n)+\mathcal Y_r(n))e^{2AN_n^2}}{N_n\pi}
\\ & +\left|e^c\left(D_r\cdot\Gamma\left(r+\frac52\right)(AN_n^2)^{-\frac52-r}e^{3AN_n^2}+0.64\cdot2^{r+1}e^{2AN_n^2}\right)\sum_{s=0}^{r+1}\beta_sN_n^{-2s-\frac{13}{12}}\right|
\Bigg).
\end{aligned}
\end{equation}
Therefore, the proof is complete by letting
\begin{equation}\label{ErMaj}
\widehat{E}_r^{\maj}(n):=\frac{(\mathcal X_r(n)+\mathcal Y_r(n))e^{2AN_n^2}}{N_n \pi} +|\mathcal{Z}_r(n)|,
\end{equation}
where
\begin{equation}\label{Zr}
\mathcal{Z}_r(n):=e^c\left(D_r\cdot\Gamma\left(r+\frac52\right)(AN_n^2)^{-\frac52-r}e^{3AN_n^2}+0.64\cdot2^{r+1}e^{2AN_n^2}\right)\sum_{s=0}^{r+1}\beta_sN_n^{-2s-\frac{13}{12}}.
\end{equation}

\subsection{Proof of Theorem~\ref{EffectiveWright}}
Cauchy's theorem (\ref{Cauchy}) gives
$\PL(n) = J(n)+ E^{\min}(n).$
Therefore, the theorem follows from Proposition~\ref{ZagierHelp} and Proposition~\ref{MajorArc}.

\section{Proof of Theorems~\ref{Thm1} and \ref{Thm2}}\label{ProofsPart2}
Here we make use of Theorem~\ref{EffectiveWright} to prove Theorems~\ref{Thm1} and \ref{Thm2}.

\subsection{Proof of Theorem~\ref{Thm1}}\label{ProofsPart1}

To apply Theorem~\ref{EffectiveWright} with a fixed $r,$ it is natural  to express 
\begin{equation}
\PL(n)=\widehat{\PL}_r(n)+O_{\leq}\left(\mathcal E_r(n)\right),
\end{equation}
where $\widehat{\PL}_r(n)$ is the main term and $\mathcal E_r(n)$ is an explicit bound for the error. Then we can write
\begin{equation}\label{LCEst}
\begin{split}
\PL(n)^2-&\PL(n-1)\PL(n+1) \\
&\ \ \ \ \  \geq (\widehat{\PL}_r(n)-\mathcal E_r(n))^2-(\widehat{\PL}_r(n-1)+\mathcal E_r(n-1))\cdot(\widehat{\PL}_r(n+1)+\mathcal E_r(n+1)).
\end{split}
\end{equation}

We apply Theorem~\ref{EffectiveWright} with $r=2.$ 
We first determine the $n$ to which it applies.
By (\ref{nrdefn}),  $n_2$ is the point beyond where the following expression is guaranteed to be less than $1$:
\[
\frac{7 \, {\left(243 \cdot 2^{\frac{2}{3}} \pi^{2} n^{\frac{1}{3}} A^{\frac{5}{3}} + 32 \cdot 2^{\frac{1}{3}} \pi^{2} n^{\frac{5}{3}} A^{\frac{1}{3}} + 64 \pi^{2} n A + 64 \cdot 2^{\frac{2}{3}} n^{\frac{4}{3}} A^{\frac{2}{3}} + 256 \cdot 2^{\frac{1}{3}} n^{\frac{2}{3}} A^{\frac{4}{3}} + 2916 A^2\right)}}{32000 n^{2}}.\]
This expression is decreasing in $n.$ At $n=1$ it is $\approx2.4$, and at $n=2$ it is $\approx0.8$, and so $n_2=2$. 
To compute $\ell_2$, we determine when (\ref{ellrdefn}) is guaranteed to be less than $1/2$. This quantity is bounded from above by
$0.00005n^{-\frac83}+5e^{-3.5n^{\frac13}},$
which is decreasing in $n$ and is less than $0.16$ for $n=1$, and so we have $\ell_2=1$. 
Therefore, $\max\{\ell_2,n_2,87\}=87$, and so Theorem~\ref{EffectiveWright} holds for $n\geq87$. 

The terms defining $\widehat{\PL}_r(n)$ come from the double sum in Theorem~\ref{EffectiveWright}, and can be organized by the powers of $n$ which are controlled by $s+m$. To this end, we recall
\[
\PL(n)\approx\frac{e^{c+3AN_n^2}}{2\pi }\sum_{s=0}^{r+1}\sum_{m=0}^{r+1}\frac{(-1)^m\beta_sb_{s,m}\Gamma\left(m+\frac12\right)}{A^{m+\frac12}N_n^{2s+2m+\frac{25}{12}}}=:e^{3AN_n^2}n^{-\frac{25}{36}}\sum_{s=0}^{r+1}\sum_{m=0}^{r+1}f_{s,m}(n).
\]
The leading asymptotic is given by 
$f_{0,0}(n)=2^{\frac{25}{36}}e^cA^{\frac{7}{36}}/\sqrt{12\pi}.$
The next largest term, with a power saving of $n^{-\frac23}$, is given by the terms with $s+m=1$:
\[
f_{1,0}(n)+f_{0,1}(n)=-\frac{\sqrt{3}\cdot2^{\frac{13}{36}}e^{c} {\left(3 A + 1385\right)} }{25920 \, \sqrt{\pi} A^{\frac{5}{36}}}\cdot n^{-\frac23}.
\]
The terms with $s+m=2$ give 
\[
f_{1,1}(n)+f_{2,0}(n)+f_{0,2}(n)=-\frac{\sqrt{3} \cdot2^{\frac{1}{36}} e^c{\left(1377 A^{2} - 370650  A+ 12525625\right)}}{1567641600  \sqrt{\pi} A^{\frac{17}{36}}}\cdot n^{-\frac{4}{3}}.
\]
The final term which contributes to our main estimate is the sum of terms with $s+m=3$, giving 
\begin{equation*}
\begin{aligned}
f_{3,0}(n)&+f_{0,3}(n)+f_{2,1}(n)+f_{1,2}(n)
\\&=-\frac{\sqrt{3}\cdot 2^{\frac{25}{36}}e^c {\left(609309 A^{3} - 90985275 A^{2} + 4957761375 A + 37576109375\right)}}{40633270272000  \sqrt{\pi} A^{\frac{29}{36}}}\cdot n^{-2}.
\end{aligned}
\end{equation*}
These contributions together give our main term 
\begin{equation}\label{Mainr2}
\begin{aligned}
\widehat{\PL}_2(n):=e^{3AN_n^2}n^{-\frac{25}{36}}&(f_{0,0}(n)+f_{1,0}(n)+f_{0,1}(n)+f_{1,1}(n) +f_{2,0}(n)+f_{0,2}(n)\\ +&f_{3,0}(n)+f_{0,3}(n)+f_{2,1}(n)+f_{1,2}(n))
\\
\approx &e^{c+3AN_n^2}n^{-\frac{25}{36}}(0.23-0.056 n^{-\frac23}-0.006n^{-\frac43}-0.001n^{-2}).
\end{aligned}
\end{equation}

The remaining terms with $s+m\geq4$ are of equal or smaller magnitude than the error which will come from $E^{\rm{maj}}_2(n)$ and $E^{\rm{min}}(n)$. Thus, we will put these terms into our error estimate $\mathcal E_2(n)$. A simple manipulation shows that for $n\geq87$, we have
\[
f_{2,2}(n)+f_{3,1}(n)+f_{1,3}(n)+f_{3,2}(n)+f_{2,3}(n)+f_{3,3}(n)=O_{\leq}(10^{-5}n^{-\frac73}).
\]
Thus, Theorem~\ref{EffectiveWright} implies that for $n\geq87$, we have
\begin{equation}\label{PLBound}
\PL(n)=\widehat{\PL}_2(n)+O_{\leq}\left(10^{-5}e^{3AN_n^2}n^{-\frac{109}{36}}+\widehat{E}_2^{\rm{maj}}(n)+E_2^{\rm{min}}(n)\right).
\end{equation}

We now aim to bound $\widehat{E}_2^{\rm{maj}}(n)$. This error is $O(e^{3AN_n^2}n^{-\frac{109}{36}})$.\footnote{At the top of page 189, Wright makes a claim about the magnitude of this error term which is incorrect in the power of $N_n$. This can be seen by comparing with the error estimate in the final equation of page 188, which is correct and matches our power $-109/12$ here for $r=2$.}  Thus, we will compare our error terms to this expression. We now turn to computing the relevant constants in turn.
After computing $\alpha(1)$, $\alpha(2)$, and $\alpha(3)$, we see that computing the constant $C_2$ is equivalent to maximizing 
\[\left|e^{-\frac{e^{it}}{2880}-\frac{e^{2it}}{725760}-\frac{e^{3it}}{43545600}}\right|^2.\]
The derivative of this function is
\[
\frac{1}{7257600} \, {\left(4 \, \cos\left(t\right)^{2} + 80 \, \cos\left(t\right) + 5039\right)} e^{\left(-\frac{1}{5443200} \, \cos\left(t\right)^{3} - \frac{1}{181440} \, \cos\left(t\right)^{2} - \frac{5039}{7257600} \, \cos\left(t\right) + \frac{1}{362880}\right)} \sin\left(t\right),
\]
which shows that there is a local maximum at $t=\pi$, which is the global maximum. This directly gives $C_2=2\cdot e^{\frac{15061}{43545600}}\leq 2.0007$. 

The computation of the constants $D_r$ is more involved. 
To compute $D_r$, one recursively computes the derivatives $v^{(m)}(t)=v^{(m)}$ for $m=1,\ldots,r+3$ by repeatedly differentiating the equation $t^2=3-2v-v^{-2}$. Using the definition of $\chi_s(t)$ and the relation $t=-i(v-1)\sqrt{2v+1}/v$ gives an expression for $\chi_s(t)$ as a function of $v$. Here we need $D_2$, which requires the derivatives
\begin{equation*}
\begin{aligned}
v'&=tv^3/(v^3-1),\\ 
v''&=v^3 (1 + 3 t^2 v^2 - 2 v^3 + v^6)/(v^3-1)^3,  \\
  v^{(3)}&=3tv^5 (3  + 5 t^2 v^2 - 6 v^3 + 4 t^2 v^{5} + 3 v^{6})/(1-v^3)^5, 
 \\ v^{(4)}&=3 v^5 (3 + 30 t^2 v^2 - 12 v^3 +\dots
   + 20 t^4 v^{10}   
   + 24 t^2 v^{11} + 
   3 v^{12})/(1-v^3)^7,
   \\ v^{(5)}&=15tv^7(15 + 70 t^2 v^2 - 48 v^3 +\dots
  + 24 t^4 v^{13} + 40 t^2 v^{14} + 12 v^{15})/(1-v^3)^9,
 \\ v^{(6)}&=47v^7(5 + 105 t^2 v^2 - 26 v^3 +\dots
  + 120 t^4 v^{19} + 60 t^2 v^{20} + 
 4 v^{21})/(1-v^3)^{11},
 \\
 v^{(7)}&=315tv^9(35 + 315 t^2 v^2+\ldots+ 120 t^2 v^{23} + 20 v^{24})/(1-v^3)^{13},
 \\
 v^{(8)}&=315v^9(35 + 1260 t^2 v^2+\ldots+ 480 t^2 v^{29} + 20 v^{30})/(1-v^3)^{15}.
\end{aligned}
\end{equation*}
Differentiating and plugging into the definition of $\chi_s(t)$, we obtain expressions such as
\begin{equation}\label{ChiEx}
\begin{aligned}
\chi_1^{(6)}(t)=
\frac{iv^{\frac{121}{12}}\sqrt{2v+1}}{859963392\pi (v^2+v+1)^{17}}\cdots (181387629768625 +\ldots + 
  2444688400 v^{20}).
\end{aligned}
\end{equation}
Now that these are expressions of $v$, we can substitute $v=x\pm i\sqrt{\sqrt{x}-x^2}$, which traces out the curve $\mathcal C$ as $x$ ranges from $0$ to $1$. That is, expressions such as the absolute value of  \eqref{ChiEx} can be evaluated for $x\in[0,1]$ numerically to obtain an estimate for $D_2$. By symmetry, we only need to bound
for $v=x+i\sqrt{\sqrt{x}-x^2}$ with $x\in[0,1]$. Performing this analysis, we find that $D_2\leq 5.3$.

These constants allow us to bound $\widehat{E}^{\rm{maj}}_2(n)$ by \eqref{ErMaj} for $n\geq87.$ A simple calculation gives
\[
\frac{\mathcal X_2(n)+\mathcal Y_2(n)}{N_n\pi }\cdot e^{2AN_n^2}=O_{\leq}\left((127-10^{-5})e^{3AN_n^2}n^{-\frac{109}{36}}\right).
\]
Further simplification gives (see \eqref{Zr})
$\mathcal{Z}_2(n)=O_{\leq}\left(100e^{3AN_n^2}n^{-\frac{109}{36}}\right).$
Therefore, we find that
$$
\widehat{E}^{\rm{maj}}_2(n)=O_{\leq}\left((227-10^{-5})e^{3AN_n^2}n^{-\frac{109}{36}}\right).
$$
Since  we have
$E^{\rm{min}}(n)=O_{\leq}\left(e^{\left(3A-\frac25\right)N_n^2}\right),$ expression
 \eqref{PLBound} yields  
\begin{equation}\label{r2estimate}
\PL(n)=\widehat{\PL}_2(n)+O_{\leq}\left(227e^{3AN_n^2}n^{-\frac{109}{36}}+e^{\left(3A-\frac25\right)N_n^2}\right).
\end{equation}
In other words, we can let 
$\mathcal{E}_2(n):=227e^{3AN_n^2}n^{-\frac{109}{36}}+e^{\left(3A-\frac25\right)N_n^2}.$
A simple calculation with \eqref{LCEst} and  (\ref{Mainr2}) establishes  that $\PL(n)$ is log-concave for all $n\geq8820.$
This completes the proof as log-concavity has been confirmed on a computer for all $12\leq n\leq 10^5$
by Heim et al. \cite{HNT}.

\subsection{Proof of Theorem~\ref{Thm2}}

The proof of Theorem~\ref{Thm2} makes use of Theorem~\ref{EffectiveWright} and recent work by Griffin, Zagier, and two of the authors of \cite{GORZ} on Jensen polynomials of suitable sequences of real numbers. The main observation is that suitable real sequences 
have Jensen polynomials that can be
 modeled by the {\it Hermite polynomials} $H_d(X)$, which
are orthogonal polynomials for the measure $\mu(X)=e^{-X^2/4},$ and are  given by
the generating function
$$
{\sum_{d=0}^{\infty} H_d(X)\,\frac{t^d}{d!} \= e^{-t^2+Xt}} \= 
  1\ +X\,t  \+ (X^2-2)\,\frac{t^2}{2!}\+ (X^3-6X)\,\frac{t^3}{3!}\+\cdots
$$

\begin{theorem}[Theorems 3 and 6 of \cite{GORZ}]\label{GeneralTheorem2}
Let $\{\alpha(n)\}, \{A(n)\},$ and $\{\delta(n)\}$ be sequences of positive real numbers, with $\delta(n)$ tending to $0.$ For an integer $d\geq 3,$ suppose that there are real numbers $g_3(n), g_4(n),\dots g_d(n),$ for which
 \begin{equation}\label{log_alpha_ratio}
{ \log \Bigl({\frac{\alpha(n+j)}{\alpha(n)}\Bigr) \, \= \, A(n)j-\delta(n)^2 j^2+\sum_{i=3}^{d} g_i(n)j^i
  + \text{\rm o}\bigl(\delta(n)^d\bigr)}} \qquad\text{\rm as $n\to\infty$},  \end{equation}
with $g_i(n)=o(\delta(n)^i)$ for each $3\leq i\leq d.$ Then we have
\begin{equation}\label{tag2} \lim_{n\to\infty} \biggl(\frac{\delta(n)^{-d}}{\alpha(n)}
  \,J_\alpha^{d,n}\Bigl(\frac{\delta(n)\,X\m1}{\exp(A(n))}\Bigr)\biggr) \= H_d(X).
  \end{equation}
  \end{theorem}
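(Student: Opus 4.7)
The plan is to substitute the asymptotic hypothesis into the definition of the Jensen polynomial, convert the Gaussian factor $e^{-\delta(n)^{2} j^{2}}$ into a complex-Gaussian integral so that the sum over $j$ collapses via the binomial theorem, and then identify the resulting limit as the Hermite polynomial through its classical Gaussian integral representation. Abbreviate $A := A(n)$, $\delta := \delta(n)$, and $g_i := g_i(n)$. The hypothesis gives, for each $j \in \{0,1,\dots,d\}$,
\[
\frac{\alpha(n+j)}{\alpha(n)\, e^{A j}} = e^{-\delta^{2} j^{2}}\,(1 + \eta_j), \qquad \eta_j := \exp\!\left(\sum_{i=3}^{d} g_i\, j^{i} + o(\delta^{d})\right) - 1,
\]
so the object to analyze splits as
\[
\frac{\delta^{-d}}{\alpha(n)}\, J_\alpha^{d,n}\!\left(\frac{\delta X - 1}{e^{A}}\right) = \delta^{-d}\sum_{j=0}^{d}\binom{d}{j} e^{-\delta^{2} j^{2}}(\delta X - 1)^{j} + \delta^{-d}\sum_{j=0}^{d}\binom{d}{j} e^{-\delta^{2} j^{2}}\eta_j (\delta X - 1)^{j}.
\]

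First I would evaluate the main (first) sum. Using the Gaussian identity $e^{-\delta^{2} j^{2}} = \pi^{-1/2}\int_{\R} e^{-v^{2} + 2i\delta v j}\, dv$ and exchanging the finite $j$-sum with the integral, the binomial theorem collapses the inner sum to
\[
\sum_{j=0}^{d}\binom{d}{j} e^{-\delta^{2} j^{2}}(\delta X - 1)^{j} = \frac{1}{\sqrt{\pi}}\int_{\R}\bigl(1 + e^{2i\delta v}(\delta X - 1)\bigr)^{d} e^{-v^{2}}\, dv.
\]
Expanding $e^{2i\delta v} = 1 + 2i\delta v + O((\delta v)^{2})$ gives $1 + e^{2i\delta v}(\delta X - 1) = \delta(X - 2iv) + O(\delta^{2}(1 + v^{2}))$, with the uniform bound $|1 + e^{2i\delta v}(\delta X - 1)|/\delta \leq 2|v| + |X|$ for $|v| \leq 1/\delta$ (outside this regime the Gaussian weight $e^{-v^{2}}$ kills everything). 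Dominated convergence and division by $\delta^{d}$ then yield
\[
\delta^{-d}\sum_{j=0}^{d}\binom{d}{j} e^{-\delta^{2} j^{2}}(\delta X - 1)^{j} \;\longrightarrow\; \frac{1}{\sqrt{\pi}}\int_{\R}(X - 2iv)^{d} e^{-v^{2}}\, dv = H_d(X),
\]
the last equality being the classical Gaussian representation of the Hermite polynomial, verified by binomial expansion and the moment formula $\pi^{-1/2}\int_{\R} v^{2m} e^{-v^{2}}\, dv = (2m)!/(4^{m} m!)$.

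Next I would control the error (second) sum. Expanding the exponential gives $\eta_j = \sum_{i=3}^{d} g_i\, j^{i} + o(\delta^{d}) + O\!\bigl(\sum_{i,i'} g_i g_{i'} j^{i+i'}\bigr)$. The contribution of a single monomial $g_i\, j^{i}$ is controlled by the operator identity
\[
\sum_{j=0}^{d}\binom{d}{j} j^{i}(\delta X - 1)^{j} = \bigl((T\partial_T)^{i} (1 + T)^{d}\bigr)\Big|_{T = \delta X - 1},
\]
and substituting $U := 1 + T$ shows that $(U-1)\partial_U$ reduces $U$-degree by at most one per application; so for $i \leq d$ the polynomial $((U-1)\partial_U)^{i} U^{d}$ has minimal $U$-degree $d - i$, and evaluating at $U = \delta X$ gives an expression whose lowest-order term is of order $(\delta X)^{d-i}$. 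Multiplying by $g_i$ and dividing by $\delta^{d}$ produces a contribution of order $g_i/\delta^{i} = o(1)$ in the coefficient of $X^{d-i}$. Cross-products $g_i g_{i'}$ and higher powers are even smaller; Taylor-expanding $e^{-\delta^{2} j^{2}}$ introduces further factors of $\delta^{2}$; and the residual $o(\delta^{d})$ contributes $o(1)$ after normalization. Summing these bounds, the error sum vanishes coefficient-by-coefficient in $X$, completing the proof.

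The main technical obstacle is the uniform bookkeeping of the cross-terms generated by the exponential factor and its interaction with the Gaussian, together with the dominated convergence step for the main integral. The hypothesis $g_i(n) = o(\delta(n)^{i})$ for $3 \leq i \leq d$ is calibrated precisely so that every such contribution, after normalization by $\delta(n)^{-d}$, is $o(1)$.
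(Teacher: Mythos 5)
This statement is Theorem~3.1 of the present paper, but it is imported verbatim from Theorems~3 and~6 of \cite{GORZ} and not re-proved here; there is therefore no in-paper argument to compare against. Judging your proposal on its own merits, it is essentially correct and gives a clean route to the result. The decomposition $\alpha(n+j)/\alpha(n) = e^{A(n)j}e^{-\delta(n)^2 j^2}(1+\eta_j)$, the Fourier--Gaussian identity $e^{-\delta^2 j^2} = \pi^{-1/2}\int_\R e^{-v^2+2i\delta v j}\,dv$, the collapse of the finite $j$-sum via the binomial theorem, the uniform bound $|1+e^{2i\delta v}(\delta X - 1)|/\delta \leq 2|v|+|X|$ (which in fact holds for all $v$, not only $|v|\leq 1/\delta$), and the identification $H_d(X) = \pi^{-1/2}\int_\R (X-2iv)^d e^{-v^2}\,dv$ (consistent with the paper's normalization $\sum H_d(X)t^d/d! = e^{-t^2+Xt}$) are all correct, and dominated convergence does deliver the main term. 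The operator identity $\sum_j \binom{d}{j}j^i T^j = (T\partial_T)^i(1+T)^d$ and the observation that $((U-1)\partial_U)^i U^d$ has minimal $U$-degree $d-i$ for $i\leq d$ is exactly the right cancellation structure, and the hypothesis $g_i(n)=o(\delta(n)^i)$ is indeed calibrated so that each monomial contributes $o(1)$ after normalization by $\delta^{-d}$.

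The one place where your write-up is genuinely under-argued is the sentence ``Taylor-expanding $e^{-\delta^2 j^2}$ introduces further factors of $\delta^2$.'' Read na\"ively, this suggests bounding $e^{-\delta^2 j^2}-1 = O(\delta^2)$ and then estimating the binomial sum crudely; but that route gives only $O(\delta^2)\cdot o(\delta^3)/\delta^d = o(\delta^{5-d})$, which diverges for $d>5$. The correct argument must interleave the Taylor expansion of $e^{-\delta^2 j^2}=\sum_k (-\delta^2)^k j^{2k}/k!$ with the operator cancellation: the term in $k$ produces $\delta^{2k}\sum_j\binom{d}{j}j^{2k+i}(\delta X-1)^j$, and one must observe that for $2k+i\leq d$ the inner sum is $O(\delta^{d-2k-i})$ (giving $O(\delta^{d-i})$ overall), while for $2k+i>d$ the inner sum is $O(1)$ but the prefactor $\delta^{2k}$ already supplies more than $\delta^{d-i}$. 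Only after this case-split does the bound $O(\delta^{d-i})$ for $\sum_j\binom{d}{j}e^{-\delta^2 j^2}j^i(\delta X-1)^j$ follow, and hence $g_i\cdot O(\delta^{d-i})/\delta^d = o(1)$. The same remark applies to the cross-products $g_ig_{i'}j^{i+i'}$: one needs the operator cancellation (for $i+i'\leq d$) or the direct size of $g_ig_{i'}=o(\delta^{i+i'})=o(\delta^d)$ (for $i+i'>d$); it is not enough to say they are ``even smaller.'' None of this is a wrong idea---the bound does hold---but as written the error analysis has a gap that must be filled by running the Taylor expansion of the Gaussian factor through the same $(T\partial_T)$-cancellation machinery rather than treating it as a uniform multiplicative correction.
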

 \begin{remark}
  Theorem~\ref{GeneralTheorem2} holds for $d\in \{1, 2\}.$ In these cases there simply are no numbers $g_i(n)$. In fact, one can obtain Theorem~\ref{Thm2} for $d=2$ using this result, which is essentially the method employed by Heim et. al. in \cite{HNT} without  the formalism of Jensen polynomials.
 \end{remark}
  
Since the Hermite polynomials are hyperbolic, and since this property of a polynomial with real
coefficients is invariant under {small} deformation, we have the following key consequence.

\begin{corollary*}\label{HyperbolicCorollary}
Assuming the hypotheses in Theorem~\ref{GeneralTheorem2}, we have that
the Jensen polynomials $J_{\alpha}^{d,n}(X)$ are hyperbolic for all but finitely many 
values $n.$
\end{corollary*}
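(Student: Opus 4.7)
The plan is to combine the asymptotic identification of Jensen polynomials with Hermite polynomials provided by Theorem~\ref{GeneralTheorem2} with the classical fact that the roots of a polynomial depend continuously on its coefficients. Concretely, if we set
\[
\widetilde{J}_{\alpha}^{d,n}(X) \;:=\; \frac{\delta(n)^{-d}}{\alpha(n)}\,J_\alpha^{d,n}\!\Bigl(\frac{\delta(n)\,X-1}{\exp(A(n))}\Bigr),
\]
then $\widetilde{J}_{\alpha}^{d,n}(X) \to H_d(X)$ as $n\to\infty$ by \eqref{tag2}. The polynomial $\widetilde{J}_{\alpha}^{d,n}(X)$ is obtained from $J_\alpha^{d,n}(X)$ by a real affine substitution $X \mapsto (\delta(n)X-1)/\exp(A(n))$ together with multiplication by a positive constant. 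Since such real affine changes of variable preserve the set of real roots (and their number), it suffices to prove that $\widetilde{J}_{\alpha}^{d,n}(X)$ is hyperbolic for all sufficiently large $n$.

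First, I would invoke the well-known fact that the Hermite polynomial $H_d(X)$ has exactly $d$ distinct real roots, as it is the orthogonal polynomial of degree $d$ for the Gaussian measure $\mu(X)=e^{-X^2/4}$ on $\mathbb{R}$. In particular, all roots of $H_d$ are \emph{simple}. Next, I would use the convergence statement \eqref{tag2} coefficient by coefficient (which follows from the polynomial formulation of the limit, as $\widetilde{J}_{\alpha}^{d,n}(X)$ and $H_d(X)$ are polynomials of the same bounded degree $d$). A standard continuity argument then applies: given a monic degree $d$ polynomial $P(X)$ with $d$ simple real roots $x_1<x_2<\cdots<x_d$, pick a small $\varepsilon>0$ with $\varepsilon<\tfrac12\min_i(x_{i+1}-x_i)$. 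Since $P$ changes sign on each interval $(x_i-\varepsilon, x_i+\varepsilon)$, any polynomial $Q$ whose coefficients are sufficiently close to those of $P$ will also change sign on each such interval (by uniform continuity of polynomial evaluation on the compact set $\bigcup_i [x_i-\varepsilon,x_i+\varepsilon]$), and by the intermediate value theorem it must have at least one real root in each interval. This yields $d$ distinct real roots for $Q$, hence $Q$ is hyperbolic.

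Applying this stability principle with $P(X)=H_d(X)$ and $Q(X)=\widetilde{J}_{\alpha}^{d,n}(X)$, there exists $N=N(d)$ such that for all $n\geq N$ the polynomial $\widetilde{J}_{\alpha}^{d,n}(X)$ is hyperbolic. Reverting the affine substitution, we conclude that $J_\alpha^{d,n}(X)$ is hyperbolic for all but finitely many $n$, as claimed. The only potential delicacy is ensuring that the convergence in \eqref{tag2} is genuinely coefficient-wise (equivalently, uniform on compact subsets of $\mathbb{R}$); this is immediate once one notes that the normalized polynomials all have degree at most $d$ and the space of such polynomials is finite-dimensional, so pointwise convergence at $d+1$ values already forces coefficient convergence. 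There is no substantive obstacle beyond this bookkeeping, and the corollary follows cleanly from Theorem~\ref{GeneralTheorem2}.
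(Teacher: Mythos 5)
Your argument is correct and is precisely the expansion of the paper's one-line justification: the paper observes that Hermite polynomials are hyperbolic with simple real roots and that hyperbolicity of real polynomials is preserved under small coefficient perturbations, exactly the stability-via-sign-changes argument you spell out. The only additional bookkeeping you supply (that the normalizing factor is positive and the affine substitution has nonzero real slope, so hyperbolicity transfers between $\widetilde{J}_{\alpha}^{d,n}$ and $J_{\alpha}^{d,n}$) is implicit in the paper and handled correctly.
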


\begin{proof}[Proof of Theorem~\ref{Thm2}]
Theorem~\ref{Thm1} implies that $J_{\PL}^{2,n}(X)$ is hyperbolic for all $n\geq 12.$
Therefore, without loss of generality we may assume that $d\geq 3$.
To complete the proof, we apply Theorem~\ref{GeneralTheorem2}.

Theorem~\ref{EffectiveWright} gives constants $\kappa>0$ and $\nu_0, \nu_1,\dots$ for which $\PL(n)$ has an asymptotic expansion to all orders of $1/n$ of the form
$$
\PL(n)\sim \exp\left(\sqrt[3]{\kappa n^2}\right) n^{-\frac{25}{36}}\cdot \left(\nu_0-\sum_{m=1}^{\infty}\frac{\nu_m}{n^{\frac{2m}{3}}}\right).
$$
We choose $\nu_0$  so that the leading factor is a product of an exponential with a power of $n.$ 
Using $\log(1-X)=-X-X^2/2-X^3/3-X^4/4-\dots,$ we can rewrite this expression as
$$
\PL(n)\sim \exp(\sqrt[3]{\kappa n^2}) n^{-\frac{25}{36}}\cdot \exp\left(c_0+\frac{c_1}{n^{2/3}}+\frac{c_2}{n^{4/3}}+\frac{c_3}{n^2}+\dots\right),
$$ 
which in turn gives
$$
\log\left(\frac{\PL(n+j)}{\PL(n)}\right)\sim
\sqrt[3]{\kappa}\sum_{i=1}^{\infty}
\binomial{2/3}{i}\frac{j^i}{n^{i-\frac{2}{3}}}-\frac{25}{36}
\sum_{i=1}^{\infty}\frac{(-1)^{i-1}j^i}{i n^i}+
\sum_{s,t\geq 1} c_t \binomial{-t}{s}\frac{j^s}{n^{2(s+t)/3}}.
$$
As $\binomial{2/3}{1}=2/3>0$ and $\binomial{2/3}{2}=-1/9<0,$ Theorem~\ref{GeneralTheorem2} applies, and so its corollary
proves the theorem.
\end{proof}

\end{document}